\newtheorem{theo}{Theorem}[section]
\newtheorem{theoalph}{Theorem}
\newtheorem{cor}[theo]{Corollary}
\newtheorem{lem}[theo]{Lemma}
\newtheorem{prop}[theo]{Proposition}
\newtheorem*{claim}{Claim}
\theoremstyle{definition}
\newtheorem{defin}[theo]{Definition}
\newtheorem{quest}[theo]{Question}
\newtheorem*{prob}{Problem}
\theoremstyle{remark}
\newcommand{\bC}{\mathbf{C}}
\newcommand{\bN}{\mathbf{N}}
\newcommand{\bP}{\mathbf{P}}
\newcommand{\bR}{\mathbf{R}}
\newcommand{\bS}{\mathbf{S}}
\newcommand{\bt}{\mathbf{t}}
\newcommand{\bT}{\mathbf{T}}
\newcommand{\kb}{\mathfrak{b}}
\newcommand{\kc}{2^{\aleph_0}}
\newcommand{\kg}{\mathfrak{g}}
\newcommand{\sA}{\mathscr{A}}
\newcommand{\sB}{\mathscr{B}}
\newcommand{\sD}{\mathscr{D}}
\newcommand{\sF}{\mathscr{F}}
\newcommand{\sN}{\mathscr{N}}
\newcommand{\B}{\mathsf{B}}
\newcommand{\ch}{\mathrm{CH}}
\newcommand{\zfc}{\mathrm{ZFC}}
\newcommand{\sone}[2]{\mathsf{S}_1(#1,#2)}
\DeclareMathOperator{\diam}{\mathrm{diam}}
\DeclareMathOperator{\dom}{\mathrm{dom}}
\tikzset{
  symbol/.style={
    draw=none,
    every to/.append style={
      edge node={node [sloped, allow upside down, auto=false]{$#1$}}}
  }
}
\begin{document}

\title{Borel's conjecture and meager-additive sets}

\author[Daniel Calder\'on]{Daniel Calder\'on}
\address{Department of Mathematics, University of Toronto, 40 St George Street, Toronto, Ontario, M5S 2E4, Canada}
\email{d.calderon@mail.utoronto.ca}

\subjclass[2020]{Primary: 03E35, 03E75, 03E15. Secondary: 54F65.}

\keywords{Borel's conjecture, forcing, meager-additivity, selection principles.}

\begin{abstract}
We prove that it is relatively consistent with $\zfc$ that every strong measure zero subset of the real line is meager-additive while there are uncountable strong measure zero sets (i.e., Borel's conjecture fails). This answers a long-standing question due to Bartoszy\'nski and Judah.
\end{abstract}

\maketitle

\section{Introduction}

In this paper, we continue the study of the structure of strong measure zero sets.\footnote{The reader may consult Section 2 for the definitions of the concepts used in the introduction.} Strong measure zero sets were introduced by Borel in \cite{borel}, and have been studied from the beginning of the previous century. Borel conjectured that every strong measure zero set of real numbers must be countable. A few years later, Sierpi\'nski proved in \cite{sierp28} that if the continuum hypothesis ($\ch$) is assumed, then there exists an uncountable strong measure zero set of reals. Nevertheless, the question about the relative consistency of Borel's conjecture remained open until 1976 when Laver, in his ground-breaking \cite{laver76}, constructed a model of set theory in which every strong measure zero set of reals is countable. In his construction, Laver used Cohen's forcing technique.

A result of Galvin, Mycielski, and Solovay (see \cite{galmicsol79}) provides a characterization of Borel's strong nullity in terms of an algebraic (or translation-like) property for subsets of the real line. By means of this characterization, a strengthening of strong nullity, meager-additivity, appeared on the scene. Meager-additivity, as well as other smallness notions on the real line have received considerable attention in recent years. A 1993 question due to Bartoszy\'nski and Judah (see \cite{barjudah}, or \cite[Problem 12.4]{tsabansurvey}) asks whether strong nullity and meager-additivity have a very rigid relationship, in the following sense:

\begin{prob}[Bartoszy\'nski--Judah, 1993]
Suppose that every strong measure zero set of reals is meager-additive. Does Borel's conjecture follow?
\end{prob}

The main result of this paper is a negative answer to this question.

\begin{theoalph}\label{mainintro}
It is relatively consistent with $\zfc$ that every strong measure zero set of reals is meager-additive, yet Borel's conjecture fails.
\end{theoalph}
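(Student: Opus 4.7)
The plan is to start with a model of $\ch$ (so that uncountable SMZ sets exist by Sierpi\'nski's theorem), fix a distinguished uncountable SMZ set $L$ of a particularly robust type, and perform a countable-support iteration of length $\omega_2$ of a carefully tailored proper forcing $\dot Q_\alpha$. The two simultaneous goals of the iteration are (a) via standard bookkeeping over the $\aleph_2$ nice names for subsets of $2^\omega$, to force each potential SMZ set to become meager-additive, and (b) to preserve $L$ as uncountable and strongly null throughout. In the final extension, $L$ then witnesses the failure of Borel's conjecture, while every SMZ set is meager-additive, as required.

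Concretely, I would first construct $L \subseteq 2^\omega$ of size $\aleph_1$ by a transfinite recursion of Luzin type, arranging in addition, during the construction, that $L$ itself satisfies Bartoszy\'nski's combinatorial (slalom-type) characterization of meager-additivity; this built-in structure will be the leverage used when preserving $L$. The iterand $\dot Q_\alpha$ should act on a $\bP_\alpha$-name $\dot X$ for a candidate SMZ set and add generically either a slalom-type witness to the meager-additivity of $\dot X$, or (in the alternative case) a meager set $M$ with $\dot X + M = 2^\omega$, destroying the SMZ property of $\dot X$. A Laver- or Mathias-style tree forcing tuned to the combinatorics of $\dot X$ is the natural template. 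Bookkeeping over $\omega_2$ stages, with $\ch$ in the ground model and an $\aleph_2$-c.c.\ argument for the iteration, guarantees that every SMZ set in the final extension $V[G_{\omega_2}]$ is addressed at some intermediate stage.

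The main obstacle, and the technical heart of the argument, is a preservation theorem asserting that the iteration leaves $L$ strongly null. By the Galvin--Mycielski--Solovay characterization this is equivalent to showing that no meager set added by the iteration covers $L$ under translation. Laver forcing --- the prototypical tool in this area, and what drives Laver's proof of the consistency of Borel's conjecture --- destroys \emph{all} uncountable ground-model SMZ sets, so it cannot be used as the iterand; instead, $\dot Q_\alpha$ must be finely tuned to ``localise'' its combinatorial effect to $\dot X$ and spare $L$. I would first prove at the single-step level that forcing with $\dot Q_\alpha$ does not add a meager set covering $L$ by translates, using the generic way $L$ was built to match the iterand's combinatorics, and then lift this to the countable-support iteration via a Shelah-style preservation theorem for proper iterations. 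Once preservation is in hand, the verification in $V[G_{\omega_2}]$ is straightforward: $L$ still witnesses uncountable SMZ and so Borel's conjecture fails, while every SMZ set in $V[G_{\omega_2}]$ was caught at some stage and is meager-additive, since the ``killing SMZ'' alternative cannot apply to a set that remains SMZ in the final model.
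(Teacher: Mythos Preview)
The paper's route is quite different from yours and sidesteps the preservation problem entirely. It does \emph{not} fix a distinguished SMZ set to be protected. Over $V=L$ it performs a countable-support $\omega_2$-iteration of a Silver-style forcing $\bP(\hat\bT)$: conditions are partial selectors from a given sequence $\bT$ of $\omega$-covers of $\bC$, subject to a staggered-divergence requirement on the gaps in their domains. This forcing satisfies a strengthening of Axiom~$\mathsf{A}$ (called Axiom~$\B$), so the iteration is proper and $\omega^\omega$-bounding; by a result of Goldstern--Judah--Shelah this already forces $\sN^+(\bC)\subseteq[\bC]^{\leq\aleph_1}$ in the extension. The generic total selector, on the other hand, witnesses that every ground-model real lies in all but finitely many members of a prescribed family of finite unions, so bookkeeping over all sequences of $\omega$-covers makes \emph{every} set of size at most $\aleph_1$ in $V[G_{\omega_2}]$ $\zeta$-supernull, hence sharp measure zero, hence meager-additive. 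One obtains $\sN^+(\bC)=\sN^\star(\bC)=[\bC]^{\leq\aleph_1}$, and the failure of Borel's conjecture is automatic rather than a delicate preservation statement.

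Your plan, by contrast, has a genuine gap at its core: the iterand $\dot Q_\alpha$ is never actually defined, and the dichotomy you want it to force (``add a slalom witness for $\dot X$, or else add a meager $M$ with $\dot X+M=2^\omega$'') is not obviously realizable by a proper forcing that simultaneously spares a fixed $L$. You correctly observe that Laver- and Mathias-type forcings add dominating reals and hence destroy \emph{all} uncountable ground-model SMZ sets, including $L$; but no replacement is offered beyond ``tuned to the combinatorics of $\dot X$''. Worse, in the ``kill'' branch you are deliberately adding a meager set whose translates cover $\dot X$, and there is no visible mechanism preventing that same meager set from covering $L$ as well --- so the preservation claim is not merely unproved but in tension with the very design of the iterand. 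Without a concrete forcing notion and an argument that its effect is genuinely local to $\dot X$, the proposal remains a strategy outline rather than a proof. The paper's insight is precisely that one can avoid this entire difficulty: keep the iteration $\omega^\omega$-bounding so that SMZ sets are forced to be small, and make \emph{all} small sets meager-additive simultaneously.
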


For the proof of Theorem \ref{mainintro}, we use the technique of iterated forcing with countable support to construct a model of set theory in which there are uncountable strong measure zero sets, and every strong measure zero set in the final extension appears in some intermediate stage of the iteration. This allows us to ``catch the tail'' in a way such that every strong measure zero set of reals in the final extension is forced to satisfy a certain selection principle in the sense of Scheepers (see \cite{schcomb1}) that implies meager-additivity.

The work is organized as follows: In Section 2 we will introduce (in a way as self-contained as possible) the essential preliminaries to this paper, as well as the terminology that will be used hereby. In Section 3 we will introduce and analyze a forcing notion that will be crucial for our construction. In Section 4 we will offer a proof of Theorem \ref{mainintro}. Finally, in Section 5, we will discuss some concluding remarks and open problems.

\subsubsection*{Acknowledgments}

My sincerest gratitude goes to Stevo Todor\v{c}evi\'c, for his patience, and for the crucial suggestions he gave me while working on this problem and on the earlier drafts of this paper. I also wish to thank Osvaldo Guzm\'an for the knowledge he shared with me, to Ond\v{r}ej Zindulka for pointing out my attention to \cite{hruzind19}, and to the anonymous referee for useful suggestions that considerably improved the presentation of this article.

\section{Preliminaries}

\subsection{Terminology}\label{nota}

We will denote by $\bN$ the set of non-negative integers. Recall that a subset $a$ of $\bN$ is called an \emph{initial segment} if for every $n\in a$, and $m<n$, we have that $m\in a$. Every initial segment of $\bN$ is either finite or equals $\bN$. If $a$ is an initial segment of $\bN$, and $X$ is a countable set, a function $\pi\colon X\to a$ is called a \emph{partition} of $X$. We will use the notation $[n]_\pi:=\pi^{-1}\{n\}\subseteq X$ for each piece of the partition $\pi$. We will say that $\pi$ is a \emph{partition into finite sets} if each $[n]_\pi$ is finite.

We will also denote by $\bR$ the set of real numbers, and by $\bC$ the Cantor space of two-valued sequences $x\colon\bN\to\{0,1\}$; both endowed with the usual structure that makes them Polish (separable and completely metrizable) groups.

\subsection{Meager-additivity and strong nullity}

In \cite{borel}, Borel introduced the notion of strong nullity for sets of real numbers. Recall that if $Y$ is a metric space with distance function $d$, then the \emph{diameter} of its subset $X$ is defined as
\begin{equation*}
    \diam X:=\sup_{x,y\in X}d(x,y).
\end{equation*}
A metric space $X$ has \emph{strong measure zero} if for every sequence $(\varepsilon_n:n\in\bN)$ of positive real numbers, there is an open cover $\{U_n:n\in\bN\}$ of $X$ with $\diam U_n\leq\varepsilon_n$ for every $n\in\bN$. If $Y$ is a metric space, we will denote by $\sN^+(Y)$ the class of subsets of $Y$ that are strong measure zero spaces.

A classical result of Galvin, Mycielski, and Solovay (see \cite{galmicsol79}) establishes a link between Borel's strong nullity and a translation-like property for subsets of the reals. The Galvin--Mycielski--Solovay theorem asserts that a set $X$ of real numbers is a strong measure zero space if, and only if, $X+M\neq\bR$ for every meager set $M\subseteq\bR$. The same result holds if subsets of the Cantor space are considered instead of subsets of the real line. The Galvin--Mycielski--Solovay theorem motivates the definition of meager-additivity. A set $X$ of real numbers is \emph{meager-additive} if $X+M$ is meager for every meager set $M\subseteq\bR$. The notion of meager-additivity can be extended to any topological group. If $Y$ is a topological group, we will denote by $\sN^\star(Y)$ the class of meager-additive subsets of $Y$.

We spell out the following fact, which is essentially \cite[Theorem 3.5]{hruzind19}:

\begin{prop}\label{sigmaideal}
Let $Y$ be a locally compact Polish group. Then both classes $\sN^+(Y)$ and $\sN^\star(Y)$ are $\sigma$-ideals of sets with $\sN^\star(Y)\subseteq\sN^+(Y)$, and if $Y$ is either the real line, or the Cantor space, then $\sN^+(Y)\subseteq\sN(Y)$, where $\sN(Y)$ is the $\sigma$-ideal of Lebesgue-null subsets of $Y$.\qed
\end{prop}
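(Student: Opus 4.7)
The plan is to verify the four distinct assertions packaged in the proposition: that $\sN^+(Y)$ is a $\sigma$-ideal, that $\sN^\star(Y)$ is a $\sigma$-ideal, that $\sN^\star(Y)\subseteq\sN^+(Y)$, and that $\sN^+(Y)\subseteq\sN(Y)$ when $Y$ is $\bR$ or $\bC$. The first two parts are routine; the third is the substantive step, and rests on a generalization of Galvin--Mycielski--Solovay to locally compact Polish groups; the fourth is a short covering computation.

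For the $\sigma$-ideal property of $\sN^+(Y)$: heredity is immediate since any open cover of $X$ by sets of prescribed diameters is also a cover of every subset. For a countable union $X=\bigcup_{k\in\bN}X_k$ of strong measure zero sets and a target sequence $(\varepsilon_n)_{n\in\bN}$ of positive reals, I would partition $\bN$ into infinitely many infinite pieces $(A_k)_{k\in\bN}$, apply the strong measure zero hypothesis to each $X_k$ with the reindexed sequence $(\varepsilon_n)_{n\in A_k}$ to obtain an open cover $(U_n)_{n\in A_k}$ of $X_k$ with $\diam U_n\le\varepsilon_n$, and concatenate to obtain the required cover of $X$. For $\sN^\star(Y)$: heredity follows from $X'\subseteq X\Rightarrow X'+M\subseteq X+M$; closure under countable unions from $\bigl(\bigcup_k X_k\bigr)+M=\bigcup_k(X_k+M)$, a countable union of meager sets being meager.

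For $\sN^\star(Y)\subseteq\sN^+(Y)$, I would invoke the generalization of the Galvin--Mycielski--Solovay theorem to locally compact Polish groups, precisely as in \cite{hruzind19}: a set $X\subseteq Y$ has strong measure zero if and only if $X+M\neq Y$ for every meager $M\subseteq Y$. Any meager-additive set $X$ satisfies, for each meager $M\subseteq Y$, that $X+M$ is meager; since $Y$ is a nonempty Baire space (locally compact Hausdorff spaces are Baire), this forces $X+M\neq Y$, so the characterization delivers $X\in\sN^+(Y)$.

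For $\sN^+(Y)\subseteq\sN(Y)$ when $Y\in\{\bR,\bC\}$: fix $X\in\sN^+(Y)$ and $\varepsilon>0$, choose $\varepsilon_n:=\varepsilon\cdot 2^{-n-2}$ (in $\bC$ one may round down to a dyadic $2^{-k_n}\le\varepsilon_n$), and use strong measure zero to cover $X$ by open sets $U_n$ with $\diam U_n\le\varepsilon_n$; each $U_n$ is then contained in a set of Lebesgue measure at most $\varepsilon_n$ (an interval of length $\varepsilon_n$ in $\bR$, a basic clopen in $\bC$), so the cover has total measure at most $\varepsilon$, whence $X$ is Lebesgue null. The main obstacle is clearly the third item: the classical Galvin--Mycielski--Solovay argument for $\bR$ exploits arithmetic translation-cover constructions on the line, and extending it to an abstract locally compact Polish group requires a careful use of a left-invariant Haar measure together with local compactness, which is the content of the cited \cite[Theorem 3.5]{hruzind19}.
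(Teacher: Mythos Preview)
The paper does not give a proof of this proposition: it is stated with a terminal \qed and is introduced as ``essentially \cite[Theorem 3.5]{hruzind19}''. So there is no argument in the paper to compare against, only a citation.

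Your proposal is a correct unpacking of what stands behind that citation. The $\sigma$-ideal verifications for $\sN^+(Y)$ and $\sN^\star(Y)$ and the inclusion $\sN^+(Y)\subseteq\sN(Y)$ for $Y\in\{\bR,\bC\}$ are standard and your sketches are accurate. For the inclusion $\sN^\star(Y)\subseteq\sN^+(Y)$ you correctly isolate the one nontrivial ingredient: the Galvin--Mycielski--Solovay characterization in the generality of locally compact Polish groups, which is precisely what the reference \cite{hruzind19} supplies. One small remark: the proof of the relevant direction of GMS (that $X+M\neq Y$ for all meager $M$ forces $X$ to be strong measure zero) in this generality is a compactness/Baire-category argument rather than a Haar-measure computation, so your closing sentence slightly mislocates the mechanism; but your identification of where the real work lies and which theorem in \cite{hruzind19} delivers it matches the paper's intent.
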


In \cite{zind18}, Zindulka offered a Borel-like characterization of meager-additivity for subsets of the Cantor space $\bC$. A metric space $X$ has \emph{sharp measure zero} if, and only if, for every sequence $(\varepsilon_n:n\in\bN)$ of positive real numbers, there exists an open cover $A=\{U_n:n\in\bN\}$ of $X$ with $\diam U_n\leq\varepsilon_n$, and there is a partition $\pi\colon A\to\bN$ into finite sets such that every $x\in X$ is in all but finitely many elements of the set
\begin{equation*}
    \left\{\bigcup[n]_\pi:n\in\bN\right\}.   
\end{equation*}
Zindulka's results (see \cite[Theorem 1.3]{zind18}) imply that a subset of the Cantor space is meager-additive if, and only if, it has sharp measure zero. By \cite[Theorem 6.4]{zind18}, the same characterization holds for subsets of any Euclidean space $\bR^n$. Moreover, Hru\v{s}\'ak and Zindulka proved in \cite[Theorem 7.6]{hruzind19} that if $Y$ is a locally compact Polish group admitting a two-sided invariant metric, then a subset of $Y$ is meager-additive if, and only if, it has sharp measure zero.

Finally, let $c\colon\bC\to[0,1]$ be the usual map given by
\begin{equation*}
    c(x):=\sum_{n\in\bN}\frac{x(n)}{2^{n+1}}.
\end{equation*}
The following proposition, that is a combination of \cite[Lemma 8.1.12]{barjudbook} and \cite[Proposition 6.2]{zind18}, will be useful later on:

\begin{prop}\label{weisszind}
A subset $X$ of the unit interval $[0,1]$ has strong (sharp) measure zero if, and only if, $c^{-1}(X)$ has strong (sharp) measure zero.\qed
\end{prop}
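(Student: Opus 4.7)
The plan is to exploit two basic quantitative properties of $c$. First, $c$ is $1$-Lipschitz for the standard ultrametric $d(x,y)=2^{-\min\{n:x(n)\neq y(n)\}}$ on $\bC$: if $x,y$ agree on the first $n$ coordinates, then $|c(x)-c(y)|\leq\sum_{k\geq n}2^{-(k+1)}=2^{-n}$, so $c$ never increases diameters. Second — the key covering lemma — for every closed interval $I\subseteq[0,1]$ with $\diam I\leq 2^{-n}$, the preimage $c^{-1}(I)$ is covered by at most three basic clopen sets of length $n$ (hence of diameter $2^{-n}$). To prove this, note that $I\subseteq[k/2^n,(k+2)/2^n]$ for some $k$, and that the preimage of each dyadic subinterval $[j/2^n,(j+1)/2^n]$ equals $[s_j]\cup\{s_{j-1}{}^\frown 1^\infty,\,s_{j+1}{}^\frown 0^\infty\}$, where $s_j$ denotes the length-$n$ binary expansion of $j$. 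Since $\diam I\leq 2^{-n}$, the interval $I$ cannot contain both outer endpoints $k/2^n$ and $(k+2)/2^n$, so only one of the ``extra'' preimages $s_{k-1}{}^\frown 1^\infty$ or $s_{k+2}{}^\frown 0^\infty$ can lie in $c^{-1}(I)$; that single extra point sits in either $[s_{k-1}]$ or $[s_{k+2}]$, yielding the three-set cover $[s_k]\cup[s_{k+1}]$ plus one adjacent basic clopen.

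For the forward direction ($c^{-1}(X)\in\sN^+$, respectively $\sN^\star$, implies $X\in\sN^+$, respectively $\sN^\star$): given $(\varepsilon_n)$, apply the hypothesis to $(\varepsilon_n/2)$ to obtain an open cover $\{U_n\}$ of $c^{-1}(X)$ with $\diam U_n\leq\varepsilon_n/2$, together with a partition $\pi$ of $\bN$ into finite sets in the sharp case. By the Lipschitz property $\diam c(U_n)\leq\varepsilon_n/2$; enlarge $c(U_n)$ to its open $\varepsilon_n/4$-neighborhood $V_n\subseteq[0,1]$ to regain openness while keeping $\diam V_n\leq\varepsilon_n$. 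The partition $\pi$ transfers by surjectivity of $c$: any $x\in X$ has a preimage $y\in c^{-1}(X)$ lying in all but finitely many $\bigcup[n]_\pi$, hence $x=c(y)$ lies in the corresponding image unions for all but finitely many $n$.

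For the converse direction, fix bijections $\phi_i\colon\bN\to A_i$ ($i=1,2,3$) partitioning $\bN$ into three infinite pieces and set $\delta_n:=\tfrac12\min_i\varepsilon_{\phi_i(n)}$. Apply the strong (respectively sharp) nullity of $X$ to the sequence $(\delta_n)$ to obtain a cover $\{I_n\}$ with $\diam I_n\leq\delta_n$, plus a partition $\pi$ in the sharp case. Choose $k_n$ maximal with $2^{-k_n}\geq\delta_n$, so $\delta_n\leq 2^{-k_n}<2\delta_n\leq\varepsilon_{\phi_i(n)}$ for each $i$, and invoke the covering lemma to cover $c^{-1}(I_n)$ by three basic clopen sets $V_{n,1},V_{n,2},V_{n,3}$ of diameter $2^{-k_n}$; place $V_{n,i}$ at index $\phi_i(n)$ in the new cover. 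For the sharp version, define $\pi'$ by $[m]_{\pi'}:=\{\phi_i(n):n\in[m]_\pi,\,i=1,2,3\}$, which is still a partition into finite sets, and observe that $\bigcup\{V_k:k\in[m]_{\pi'}\}\supseteq c^{-1}\bigl(\bigcup\{I_n:n\in[m]_\pi\}\bigr)$, transferring the ``all but finitely many'' condition from $X$ to $c^{-1}(X)$.

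The main obstacle is the covering lemma from the first paragraph: its factor of three is forced by the $2$-to-$1$ behavior of $c$ at dyadic rationals and directly dictates the three-way splitting of $\bN$ used in the converse direction. Once the covering lemma is in hand, the sharp-nullity case differs from the strong-nullity one only in the added bookkeeping of the partition $\pi$.
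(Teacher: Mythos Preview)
The paper does not actually prove Proposition~\ref{weisszind}; it is stated with a \verb|\qed| and attributed to the combination of \cite[Lemma 8.1.12]{barjudbook} (for the strong measure zero part) and \cite[Proposition 6.2]{zind18} (for the sharp measure zero part). So there is nothing in the paper to compare your argument against beyond those citations.

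Your direct argument is essentially the standard one that underlies those references, and it is correct. The two key ingredients---that $c$ is $1$-Lipschitz, and that the $c$-preimage of any interval of length at most $2^{-n}$ is covered by at most three cylinders of length $n$---are exactly what is needed, and your verification of the three-cylinder bound via the $2$-to-$1$ behaviour at dyadic rationals is accurate. The tripling trick with the bijections $\phi_1,\phi_2,\phi_3$ then handles the converse direction, and pushing the partition $\pi$ forward to $\pi'$ by $[m]_{\pi'}=\{\phi_i(n):n\in[m]_\pi,\ i\leq 3\}$ correctly transfers the groupability. Two cosmetic points: first, the paper's definition of sharp measure zero partitions the \emph{cover} $A$ rather than the index set $\bN$, so strictly speaking you should either arrange the $U_n$'s (respectively $V_{n,i}$'s) to be pairwise distinct or phrase the partition on $A$; this is harmless bookkeeping. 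Second, your choice ``$k_n$ maximal with $2^{-k_n}\geq\delta_n$'' tacitly assumes $\delta_n\leq 1$, which is automatic once one reduces to $\varepsilon_n\leq 1$; it would be worth saying so explicitly.
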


\subsection{Topological combinatorics}

A \emph{relative open cover} of a subset $X$ of the topological space $Y$ is a family $A$ of open subsets of $Y$ such that $X\subseteq\bigcup A$. Since our definition of a relative cover $A$ depends both on $X$, and its ambient space $Y$, we will often refer to $A$ as a cover of $X|Y$.\footnote{This notation is inspired by the conditional probability of \emph{$X$ given $Y$}.} If either $X$ equals $Y$, or if the ambient space is clear from the context, we will simplify notation as much as possible.

A cover $A$ of $X|Y$ is called:
\begin{itemize}
    \item a \emph{pre-$\gamma$-cover} if every $x\in X$ is in all but finitely many elements of $A$.\footnote{A pre-$\gamma$-cover, in opposition to a $\gamma$-cover (see \cite{schcomb1} for definitions), is not required to be infinite.}
    \item a \emph{$\lambda$-cover} if every $x\in X$ is in infinitely many elements of $A$.
    \item an \emph{$\omega$-cover} if every finite subset of $X$ is included in a single element of $A$, and no element of $A$ covers $X$.
\end{itemize}
We will respectively denote by $P\Gamma[X|Y]$, $\Lambda[X|Y]$, $O[X|Y]$, and $\Omega[X|Y]$ the classes of pre-$\gamma$-covers, $\lambda$-covers, open covers, and $\omega$-covers of $X|Y$.

For the sake of completeness, we include a proof of the following well-known fact:

\begin{lem}[Folklore]\label{omegaone}
For every topological space $Y$, and every $A\in\Omega[Y]$, if $A$ is partitioned into finitely many pieces, then at least one of the pieces is an $\omega$-cover of $Y$. In particular, for every finite subset $F\subseteq A$, we have that $A\setminus F\in\Omega[Y]$.
\end{lem}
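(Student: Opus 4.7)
The plan is to argue by contradiction using the finite-intersection-style nature of $\omega$-covers. Suppose $A\in\Omega[Y]$ is partitioned as $A=A_1\sqcup\cdots\sqcup A_k$, and assume for contradiction that no $A_i$ is an $\omega$-cover of $Y$. Since each $A_i\subseteq A$, and no $U\in A$ satisfies $U\supseteq Y$, the same holds for each $A_i$; hence the failure of $A_i$ to be an $\omega$-cover must arise from the other clause in the definition. That is, for every $i\in\{1,\ldots,k\}$ there exists a finite set $F_i\subseteq Y$ such that no single element of $A_i$ contains $F_i$.

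The main step is then to use the $\omega$-cover property of the whole family $A$ as a pigeonhole. Set $F:=F_1\cup\cdots\cup F_k$, which is still a finite subset of $Y$. Since $A\in\Omega[Y]$, there is some $U\in A$ with $F\subseteq U$. This $U$ belongs to $A_j$ for some $j$, and then $F_j\subseteq F\subseteq U$, contradicting the choice of $F_j$. This yields the first assertion.

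For the ``in particular'' clause, I would apply the first part to the partition $A=F\sqcup(A\setminus F)$, where $F\subseteq A$ is the given finite subfamily. To conclude that the $\omega$-cover piece is $A\setminus F$ rather than $F$, observe that a finite subfamily none of whose elements covers $Y$ can never be an $\omega$-cover: enumerating $F=\{U_1,\ldots,U_n\}$ and choosing for each $i\leq n$ a point $y_i\in Y\setminus U_i$ (possible because $U_i\not\supseteq Y$, as $F\subseteq A$ and $A$ is an $\omega$-cover), the finite set $\{y_1,\ldots,y_n\}\subseteq Y$ is not contained in any single $U_i$. Hence $F$ is not an $\omega$-cover, so $A\setminus F$ must be.

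I do not anticipate a serious obstacle here; the only subtlety is the bookkeeping reminder that the ``no element covers $Y$'' clause in the definition of an $\omega$-cover passes automatically to subfamilies, so that the only potential mode of failure in the pieces $A_i$ is the finite-subset clause, which is exactly the clause that the pigeonhole argument exploits.
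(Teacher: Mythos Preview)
Your proof is correct and the main contradiction argument is essentially identical to the paper's. The only minor difference is in the ``in particular'' clause: the paper removes one element at a time and inducts on $|F|$, whereas you apply the first part directly to the two-piece partition $F\sqcup(A\setminus F)$ and rule out $F$ by exhibiting the witness $\{y_1,\ldots,y_n\}$; both are equally valid and equally short.
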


\begin{proof}
Let $A\in\Omega[Y]$, and let $\pi\colon A\to\{0,\dots,k-1\}$ be a partition. Clearly, no $[i]_\pi$ has $Y$ as an element. If no $[i]_\pi$, for $i<k$, is an $\omega$-cover of $Y$, choose finite subsets $F_i\subseteq Y$, for $i<k$, such that no element of $[i]_\pi$ includes $F_i$. Then no element of $A$ includes the finite set
\begin{equation*}
    \bigcup_{i<k}F_i\subseteq Y,
\end{equation*}
which is a contradiction. For the second part of the statement, let $U\in A$ be fixed. Since $A=(A\setminus\{U\})\cup\{U\}$, and $U\neq Y$, $A\setminus\{U\}$ is an $\omega$-cover of $Y$. If $F\subseteq A$ is an arbitrary finite set, proceed inductively.
\end{proof}

Let $\sA$ and $\sB$ be classes of relative covers (not necessarily of the same subset) on a space $Y$. We will say that the \emph{selection principle $\sone{\sA}{\sB}$ holds} if, and only if, for every sequence $(A_n:n\in\bN)$ of elements of $\sA$ there exists, for each $n\in\bN$, some open set $U_n\in A_n$ such that the set $\{U_n:n\in\bN\}$ is an element of $\sB$.

Following Ko\v{c}inac and Scheepers in \cite{kocsch03}, a cover $A$ of $X|Y$ is called \emph{$\lambda$-groupable} if it is infinite and there exists a partition $\pi\colon A\to\bN$ into finite sets such that every element of $X$ is in all but finitely many elements of the set
\begin{equation*}
    \left\{\bigcup[n]_\pi:n\in\bN\right\}.
\end{equation*}
We will denote by $G\Lambda[X|Y]$ the class of $\lambda$-groupable covers of $X|Y$.

\subsection{Forcing}

A \emph{forcing notion} is a partially ordered set $\bP$. The elements of $\bP$ are also called \emph{conditions}, and if $p\leq q$ then $p$ is said to \emph{extend} $q$. Two conditions $p$ and $q$ are \emph{compatible} if a single condition extends both of them. A subset $D$ of $\bP$ is called \emph{open} if it contains all extensions of all of its elements. A subset $D$ of $\bP$ is called \emph{dense} if it contains some extension of every condition in $\bP$. A subset $G$ of $\bP$ is a \emph{filter} if it satisfies the following two conditions:
\begin{enumerate}
    \item If $p\in G$, and $p\leq q$, then $q\in G$.
    \item Every two elements of $G$ have a common extension in $G$.
\end{enumerate}

If $\sD$ is a family of dense open subsets of $\bP$, then a filter $G$ is called \emph{$\sD$-generic} if it intersects every element of $\sD$ non-trivially. If a filter $G$ on $\bP$ intersects all dense open subsets of $\bP$ that belong to the transitive model $V$, then $G$ is said to be \emph{$V$-generic}. In this situation, one can define the forcing (or generic) extension $V[G]$ which is a transitive model of set theory that includes $V$ and contains $G$ as an element. The model $V$ is usually referred to as the \emph{ground model}. Our notation is standard and follows \cite{halbsetbook}, \cite{jechbookset}, and \cite{kunbook}; which are also standard references for the general theory concerning the forcing technique.

\section{The forcing notions $\bP(\bt)$}

In this section, we will introduce (and analyze some of the properties of) a variation of Silver's forcing of partial functions into $\{0,1\}$, with their domain included in $\bN$, and such that the complement of each of their domains is infinite (see \cite[Chapter 22]{halbsetbook}). Before introducing this family of forcing notions, we will need some definitions and results from the general theory.

\begin{defin}\label{typeBdefin}
The forcing notion $\bP$ is said to \emph{satisfy Axiom $\B$} if there exists a sequence $(\leq_n:n\in\bN)$ of partial orders on $\bP$ such that:
\begin{enumerate}
    \item\label{1.typeBdefin} For every $n\in\bN$, if $p\leq_n q$, then $p\leq q$.
    \item\label{2.typeBdefin} For every $n\in\bN$, if $p\leq_{n+1}q$, then $p\leq_n q$.
    \item\label{3.typeBdefin} For every sequence $(p_n:n\in\bN)$ of elements of $\bP$ such that $p_{n+1}\leq_n p_n$, there exists a condition $p\in\bP$ such that $p\leq_n p_n$ for every $n\in\bN$.
    \item\label{4.typeBdefin} For every $q\in\bP$, and every $n\in\bN$, if $q\Vdash``\tau\in V"$, then there exist a finite set $H\in V$, and $p\leq_n q$, such that $p\Vdash``\tau\in H"$.
\end{enumerate}
\end{defin}

Recall that the forcing notion $\bP$ is said to be \emph{$\omega^\omega$-bounding} if, and only if, for every $V$-generic filter $G$ on $\bP$, and for every function $f\colon\bN\to\bN$ in $V[G]$, there exists a function $g\colon\bN\to\bN$ in $V$ such that $f(n)<g(n)$ for every $n\in\bN$.

\begin{prop}\label{omegabound}
If the forcing notion $\bP$ satisfies Axiom $\B$, then it is proper and $\omega^\omega$-bounding. 
\end{prop}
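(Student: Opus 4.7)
The plan is to deduce both conclusions from the same fusion scheme: conditions (2)--(3) guarantee that any decreasing $\leq_n$-sequence has a lower bound below all its entries, while condition (4) lets one refine a condition to one that decides a name forced into $V$ inside a ground-model finite set. Iterating (4) along a fusion sequence and taking the limit through (3) produces, from any countable list of names forced into $V$, a single condition that simultaneously decides each of them into a finite set in $V$. Both halves of the proposition are instances of this template.

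For $\omega^\omega$-bounding, I would fix $q\in\bP$ and a name $\dot f$ forced by $q$ to be a function $\bN\to\bN$. Since each value $\dot f(n)$ is forced into $\bN\subseteq V$, condition (4) applies; recursively set $p_0:=q$ and choose $p_{n+1}\leq_n p_n$ together with a finite $H_n\in V$ so that $p_{n+1}\Vdash\dot f(n)\in H_n$. A fusion limit $p$ from (3) satisfies $p\leq q$ via (1), and the ground-model function $g(n):=1+\max H_n$ is forced by $p$ to dominate $\dot f$ everywhere. A standard density argument (the set of such pairs $(p,g)$ is dense below $q$) then upgrades this to the required statement about arbitrary $V$-generic extensions.

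For properness, I would fix a sufficiently large regular $\theta$, a countable elementary submodel $M\prec H(\theta)$ containing $\bP$ and a chosen witness $(\leq_n:n\in\bN)$ to axiom $\B$, and a condition $q\in\bP\cap M$. Enumerate in $V$ all $\bP$-names in $M$ for ordinals as $(\dot\alpha_n:n\in\bN)$. The same fusion recipe applies, but now each successor step must be performed inside $M$: given $p_n\in M$, the existential statement furnished by (4) asserts, with parameters $\bP$, $\leq_n$, $p_n$, $\dot\alpha_n$ all in $M$, that some $r\leq_n p_n$ and finite $H$ satisfy $r\Vdash\dot\alpha_n\in H$; elementarity reflects this into $M$, yielding $p_{n+1},H_n\in M$. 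A fusion limit $p$ from (3) then satisfies $p\leq q$ and $p\Vdash\dot\alpha_n\in H_n$ for every $n$. Because $H_n\in M$ is a finite set of ordinals, it is included in $M$, so $p$ forces every name in $M$ for an ordinal to evaluate into $M$; this is the standard characterization of $(M,\bP)$-genericity of $p$, which is what properness requires.

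The only subtle point is keeping the entire fusion inside $M$ in the properness argument: axiom $\B$ in its raw form only guarantees witnesses in $V$, and one must invoke elementarity of $M\prec H(\theta)$ to pull them back into $M$. This requires the witnessing sequence $(\leq_n)$ itself to lie in $M$, which is harmless since countable parameters may always be absorbed into $M$ at the outset. Once this is secured, both halves of the proposition follow from a single capture-and-fuse iteration, with the only arithmetic being the passage from $H_n$ to the dominating function $g$ in the $\omega^\omega$-bounding half.
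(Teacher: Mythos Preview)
Your proof is correct and follows the same fusion template as the paper. For $\omega^\omega$-bounding the arguments are essentially identical (the paper starts the fusion at $\mathbf{1}_\bP$ rather than an arbitrary $q$, but this is only cosmetic). For properness the paper does not spell out the elementary-submodel fusion you give; instead it observes that Axiom $\B$ trivially implies Baumgartner's Axiom $\mathsf{A}$ and cites the standard fact that Axiom $\mathsf{A}$ forcings are proper. Your direct argument is exactly the proof underlying that citation, so the mathematical content is the same---your version is more self-contained, the paper's more economical.
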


\begin{proof}
It is clear that if $\bP$ satisfies Axiom $\B$, then it satisfies Baumgartner's Axiom $\mathsf{A}$ (see \cite[\S7]{baumaxiomA} for definitions), and therefore is proper.

To see that $\bP$ is $\omega^\omega$-bounding, let $G$ be a $V$-generic filter on $\bP$, let $f\colon\bN\to\bN$ be a function in $V[G]$, and set $p_0:=\textbf{1}_\bP$. For each $n\in\bN$, let $H_n\in V$ be a finite set, and let $p_{n+1}\leq_n p_n$ be such that $p_{n+1}\Vdash``f(n)\in H_n"$. Let $g\colon\bN\to\bN$ be the function in $V$ defined by $g(n):=\max H_n+1$. If $p\leq_n p_n$ for every $n\in\bN$, then
\begin{equation*}
    p\Vdash``(\forall n\in\bN)(f(n)<g(n))",
\end{equation*}
and therefore, in $V[G]$, $f(n)<g(n)$ for all $n\in\bN$.
\end{proof}

\begin{cor}\label{iteraomegabound}
If $\bP_{\omega_2}$ is a countable support iteration of forcing notions, all of them satisfying Axiom $\B$, then $\bP_{\omega_2}$ is proper and $\omega^\omega$-bounding.
\end{cor}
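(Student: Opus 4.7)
The plan is to combine Proposition \ref{omegabound} with the standard preservation theorems for countable support iteration of proper forcings. By Proposition \ref{omegabound}, each iterand in the iteration $\bP_{\omega_2}$ is (forced to be) proper and $\omega^\omega$-bounding. Properness of $\bP_{\omega_2}$ is then immediate from Shelah's theorem that countable support iterations of proper forcings are proper. The $\omega^\omega$-bounding conclusion follows from Shelah's preservation theorem that countable support iterations of proper $\omega^\omega$-bounding forcings are $\omega^\omega$-bounding. Both preservation theorems are treated in \cite{halbsetbook} and \cite{jechbookset}.

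A more self-contained alternative is to verify that $\bP_{\omega_2}$ itself satisfies a version of Axiom $\B$ and then reapply Proposition \ref{omegabound}. Along a fixed enumeration (in $V$) of the countable support of each condition $q\in\bP_\alpha$, one defines $p\leq_n q$ by requiring $p\leq q$ together with the corresponding fusion relation of the iterand at each of the first $n$ coordinates of $q$. Clauses (\ref{1.typeBdefin}) and (\ref{2.typeBdefin}) are immediate from the definition, and clause (\ref{4.typeBdefin}) is obtained by a bookkeeping argument combining the analogous property at each stage with properness at limit stages of uncountable cofinality, reflecting a name $\tau\in V$ through a countable elementary submodel to capture the finite set $H$.

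The main obstacle, in either approach, lies in the fusion clause (\ref{3.typeBdefin}) at limit stages of cofinality $\omega$: given $(p_n:n\in\bN)$ in $\bP_\alpha$ with $p_{n+1}\leq_n p_n$, one must assemble a limit condition $p\in\bP_\alpha$ of countable support whose restriction to each coordinate $\beta$ in the accumulated support is forced to lie below all the $p_n(\beta)$. This is the familiar fusion argument, but its execution requires careful coordination between the enumeration of supports and the coordinatewise fusion sequences supplied by the axiom at each iterand; it is precisely here that countable (rather than finite) support is essential, and it is also the technical core of the preservation theorem invoked in the first approach.
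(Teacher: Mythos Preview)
Your first paragraph is correct and is essentially what the paper does: it cites Shelah's preservation theorems directly (Theorems III.3.2 and V.4.3 of \cite{shepropbook}) rather than the textbook treatments you mention, but the argument is the same---each iterand is proper and $\omega^\omega$-bounding by Proposition \ref{omegabound}, and both properties are preserved under countable support iteration.

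Your second and third paragraphs sketch a self-contained alternative via a fusion structure on the iteration itself. The paper does not pursue this; it is content with the black-box citation. Your sketch is reasonable in spirit, but note that the iteration will not literally satisfy Axiom $\B$ as stated in Definition \ref{typeBdefin}: the fusion orders $\leq_n$ on $\bP_\alpha$ depend on bookkeeping data (an enumeration of supports, a choice of which coordinates to freeze at which stage), so one really obtains a fusion \emph{scheme} rather than a single sequence of partial orders satisfying clauses (\ref{1.typeBdefin})--(\ref{4.typeBdefin}) uniformly. This is exactly the content of the preservation theorems you invoke in the first approach, so the alternative does not buy additional economy---it merely unpacks the cited result.
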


\begin{proof}
Follows from Theorem III.3.2 and Theorem V.4.3 in \cite{shepropbook}.
\end{proof}

\begin{prop}\label{itershe}
If $\bP_{\omega_2}$ is a countable support iteration of forcing notions, all of them satisfying Axiom $\B$, such that $\bP_{\omega_2}$ has the $\aleph_2$-chain condition, and $\Vdash_\alpha\ch$ for all $\alpha<\omega_2$, then $\Vdash_{\omega_2}``\sN^+(\bC)\subseteq[\bC]^{\leq\aleph_1}"$.
\end{prop}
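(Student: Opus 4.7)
The plan is a ``catch the tail'' argument showing that every SMZ set $X\subseteq\bC$ in $V[G_{\omega_2}]$ is essentially captured at an intermediate stage, whence $|X|\le\aleph_1$ by CH at that stage.

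By Corollary~\ref{iteraomegabound}, $\bP_{\omega_2}$ is proper and $\omega^\omega$-bounding, so $\aleph_1$ and $\aleph_2$ are preserved. The $\aleph_2$-cc together with countable supports yields the standard reflection: every real in $V[G_{\omega_2}]$ belongs to some $V[G_\alpha]$ with $\alpha<\omega_2$. Indeed, a nice name for a real uses $\aleph_0$-many antichains, each of size $<\aleph_2$ by the chain condition, and each condition has countable support in $\omega_2$; the cumulative support is thus of size $\le\aleph_1$ and so bounded below $\omega_2$. Combined with CH at each intermediate stage, $|V[G_\alpha]\cap\bC|\le\aleph_1$.

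Now let $X\subseteq\bC$ be SMZ in $V[G_{\omega_2}]$. For each $i<\omega$, applying SMZ to the sequence $(2^{-i-n})_n$ produces a cover $c^i=(c^i_n)_n\in\prod_n 2^{i+n}$ of $X$; since each $c^i$ is a real, there is $\alpha^*<\omega_2$ (a countable supremum) with $\{c^i:i<\omega\}\subseteq V[G_{\alpha^*}]$. For $x\in X$ set $n(x,i):=\min\{n:c^i_n\sqsubseteq x\}$. By $\omega^\omega$-bounding of the tail iteration $\bP_{[\alpha^*,\omega_2)}$ over $V[G_{\alpha^*}]$ (itself a CS iteration of Axiom~$\B$ forcings), the function $i\mapsto n(x,i)\in\omega^\omega\cap V[G_{\omega_2}]$ is dominated by some $h_x\in V[G_{\alpha^*}]\cap\omega^\omega$. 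Consequently $x\upharpoonright i\in\{c^i_m\upharpoonright i:m<h_x(i)\}$ for every $i$, so $x$ is an infinite branch of the tree $T_{h_x}\in V[G_{\alpha^*}]$ whose level sizes are bounded by $h_x$. Since $|V[G_{\alpha^*}]\cap\omega^\omega|\le\aleph_1$, the set $X$ is covered by $\aleph_1$-many closed subsets of $\bC$ coded in $V[G_{\alpha^*}]$.

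The step I expect to be the main obstacle is concluding that each such closed set contributes at most $\aleph_1$ points to $X$, so that $|X|\le\aleph_1\cdot\aleph_1=\aleph_1$. A priori, the tail iteration could add new branches to $T_h$, inflating $|[T_h]\cap V[G_{\omega_2}]|$ beyond $\aleph_1$. Overcoming this should require a genuine use of Axiom~$\B$ beyond its $\omega^\omega$-bounding consequence: the fusion in Definition~\ref{typeBdefin}(3) together with the finite capture in~(4) should sharpen ``$n(x,i)<h_x(i)$'' into coherent finite-set memberships ``$n(x,i)\in H^x_i\in[\omega]^{<\omega}\cap V[G_{\alpha^*}]$'', yielding trees to which the tail iteration adds no further branches, and hence the desired cardinality bound.
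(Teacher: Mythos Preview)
Your argument has a genuine gap that you yourself flag, and the suggested patch is not enough to close it. The paper does not attempt a direct argument at all: it simply observes that every forcing notion satisfying Axiom~$\B$ is \emph{strongly $\omega^\omega$-bounding} in the sense of Goldstern--Judah--Shelah \cite[Definition~1.13]{goljudshe}, and then invokes \cite[Corollary~3.6]{goljudshe} together with Proposition~\ref{weisszind}. All the heavy lifting is outsourced to that reference.

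Your direct approach runs into exactly the obstacle you name: covering $X$ by $\aleph_1$-many closed sets $[T_h]$ coded in $V[G_{\alpha^*}]$ does not bound $|X|$, since the tail iteration can certainly add new branches to such trees. Your proposed refinement, replacing ``$n(x,i)<h_x(i)$'' by ``$n(x,i)\in H^x_i$'' with $H^x_i\in[\omega]^{<\omega}\cap V[G_{\alpha^*}]$, is a Sacks-property-type statement that Axiom~$\B$ does yield for a single step; but it still only produces finitely branching trees in $V[G_{\alpha^*}]$, and your assertion that ``the tail iteration adds no further branches'' to these is unjustified and in general false. A countable support iteration of Axiom~$\B$ forcings (Sacks forcing itself is an example) happily adds new branches through ground-model perfect trees, and nothing in your construction prevents $T_{h}$ from containing a perfect subtree.

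What is actually needed is an iterable strengthening of $\omega^\omega$-bounding tailored to strong measure zero---precisely the content of the Goldstern--Judah--Shelah machinery. A self-contained proof would require you to (i) isolate the right property, (ii) check that Axiom~$\B$ implies it, (iii) prove it is preserved under countable support iteration, and (iv) show it forces $\sN^+(\bC)\subseteq[\bC]^{\leq\aleph_1}$. None of these steps is present in your sketch; carrying them out would amount to reproving the cited result.
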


\begin{proof}
Every forcing notion satisfying Axiom $\B$ is strongly $\omega^\omega$-bounding in the sense of Goldstern, Judah, and Shelah in \cite[Definition 1.13]{goljudshe}. Then the proposition follows from \cite[Corollary 3.6]{goljudshe} and Proposition \ref{weisszind}.
\end{proof}

If $\pi\colon\bN\to\bN$ is a partition into finite sets such that every $[n]_\pi$ is a finite, non-empty interval, and $\max[m]_\pi<\min[n]_\pi$ whenever $m<n$, we will say that $\pi$ is an \emph{interval partition} of $\bN$. We will denote by $\zeta\colon\bN\to\bN$ the \emph{standard interval partition} of $\bN$ determined by
\begin{equation*}
    [n]_\zeta=\left[\frac{n(n+1)}{2},\frac{(n+1)(n+2)}{2}\right).
\end{equation*}

A function $f$ whose domain is some proper subset of $\bN$ will be called a \emph{partial function}. On the other hand, if the function $g$ has $\bN$ as its domain, we will say that $g$ is \emph{total}. If $f$ is a partial function, then we define the \emph{gap-counting function} $\kg_f\colon\bN\to\bN$ by
\begin{equation*}
    \kg_f(n):=\left|[n]_\zeta\setminus\dom f\right|.
\end{equation*}

A total function $g\colon\bN\to\bN$ is said to be \emph{staggered divergent} if it is non-decreasing and divergent, i.e., $\lim_n g(n)=\infty$. If $g\colon\bN\to\bN$ is divergent, we define
\begin{equation*}
    \mu_n(g):=\min\left\{m\in\bN:g(m)\geq n\right\}.
\end{equation*}
An easy (but useful) observation is that if $f$ is a partial function with divergent gap-counting $\kg_f$, then for every $n\in\bN$ we have that $\kg_f(\mu_n(\kg_f))\geq n$.

Let $\bt=(t_n:n\in\bN)$ be some sequence of sets. A partial function
\begin{equation*}
    f\colon\dom f\to\bigcup_{n\in\bN}t_n    
\end{equation*}
is called a \emph{partial $\bt$-selector} if, and only if, $f(n)\in t_n$ for all $n\in\dom f$.

For the duration of this section, fix a sequence $\bt=(t_n:n\in\bN)$ of finite sets.

\begin{defin}
The forcing notion $\bP(\bt)$ is the set of partial $\bt$-selectors $p$ such that the gap-counting function $\kg_p$ is staggered divergent.

We order $\bP(\bt)$ by $p\leq q$ if $p\supseteq q$.
\end{defin}

The aim of this section is to prove that the forcing notion $\bP(\bt)$ satisfies Axiom $\B$. To do this, we will start by defining, for each $n\in\bN$, a binary relation $\leq_n$ on $\bP(\bt)$ given by $p\leq_n q$ if, and only if, $p\leq q$ and for all $i\leq\mu_n(\kg_q)$,
\begin{equation*}
    [i]_\zeta\setminus\dom q=[i]_\zeta\setminus\dom p.    
\end{equation*}
The relation $\leq_n$ may be thought of in the following way: $p\leq_n q$ if $p$ is an extension of $q$, and these two partial functions are exactly the same until (and including) the first interval of the partition $\zeta$ in which the domain of $q$ avoids at least $n$ non-negative integers. Of course, the expression ``exactly the same'' means that these functions even have the same gaps in their domains.

\begin{lem}
For every $n\in\bN$, the relation $\leq_n$ is a partial order on $\bP(\bt)$.
\end{lem}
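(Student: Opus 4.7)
The plan is to verify reflexivity, antisymmetry and transitivity of $\leq_n$, with transitivity being the only point that requires an actual argument.

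Reflexivity is immediate: $p\leq p$ always, and $[i]_\zeta\setminus\dom p=[i]_\zeta\setminus\dom p$ trivially for every $i$. Antisymmetry is equally immediate, since $p\leq_n q$ and $q\leq_n p$ entail $p\supseteq q$ and $q\supseteq p$, hence $p=q$.

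For transitivity, suppose $p\leq_n q$ and $q\leq_n r$. Clearly $p\supseteq r$, so $p\leq r$. The substantive task is to show that
\begin{equation*}
    [i]_\zeta\setminus\dom r=[i]_\zeta\setminus\dom p \quad \text{for all } i\leq\mu_n(\kg_r).
\end{equation*}
The main step is a short monotonicity observation: since $q\leq r$ means $\dom q\supseteq\dom r$, we get $\kg_q(i)\leq\kg_r(i)$ for every $i\in\bN$, and therefore, by the very definition of $\mu_n$ as a minimum,
\begin{equation*}
    \mu_n(\kg_r)\leq\mu_n(\kg_q).
\end{equation*}
With this inequality in hand, for any $i\leq\mu_n(\kg_r)$ we have $i\leq\mu_n(\kg_q)$ as well, so the hypothesis $p\leq_n q$ yields $[i]_\zeta\setminus\dom q=[i]_\zeta\setminus\dom p$, while the hypothesis $q\leq_n r$ yields $[i]_\zeta\setminus\dom r=[i]_\zeta\setminus\dom q$. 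Chaining these two equalities closes the argument.

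The only conceptual obstacle, such as it is, lies in noticing the ``reversed'' direction of the monotonicity of $\mu_n$: passing to a stronger condition $q$ enlarges its domain, which \emph{decreases} the gap-count $\kg_q$ and therefore \emph{pushes $\mu_n(\kg_q)$ to the right} compared to $\mu_n(\kg_r)$. Once this is observed, the comparison interval $\{i:i\leq\mu_n(\kg_r)\}$ automatically sits inside the interval on which $p$ and $q$ are required to coincide gap-for-gap, and the proof is a one-line concatenation of the two hypotheses.
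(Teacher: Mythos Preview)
Your proof is correct and follows essentially the same route as the paper's: both reduce to transitivity and chain the two gap-equalities on the interval up to $\mu_n(\kg_r)$. The only cosmetic difference is that the paper observes the \emph{equality} $\mu_n(\kg_q)=\mu_n(\kg_r)$ (which indeed follows from $q\leq_n r$), whereas you use only the inequality $\mu_n(\kg_r)\leq\mu_n(\kg_q)$ derived from the weaker hypothesis $q\leq r$; either suffices.
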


\begin{proof}
It is enough to prove that the relation $\leq_n$ on $\bP(\bt)$ is transitive. Suppose that $p\leq_n q\leq_n r$. We want to conclude that $p\leq_n r$. Since $q\leq_n r$, $\mu_n(\kg_q)=\mu_n(\kg_r)$. Since also $p\leq_n q$, we have that for every $i\leq\mu_n(\kg_q)=\mu_n(\kg_r)$,
\begin{equation*}
    [i]_\zeta\setminus\dom r=[i]_\zeta\setminus\dom q=[i]_\zeta\setminus\dom p.
\end{equation*}
Therefore, $p\leq_n r$.
\end{proof}

\begin{theo}\label{typeB}
The forcing notion $\bP(\bt)$ satisfies Axiom $\B$.
\end{theo}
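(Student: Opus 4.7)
The plan is to verify the four clauses (\ref{1.typeBdefin})--(\ref{4.typeBdefin}) of Axiom $\B$ for the orders $\leq_n$ in turn. Clauses (\ref{1.typeBdefin}) and (\ref{2.typeBdefin}) are immediate from the definition: the first conjunct of $p \leq_n q$ is exactly $p \leq q$; and since $\kg_q$ is non-decreasing one has $\mu_n(\kg_q) \leq \mu_{n+1}(\kg_q)$, so equality of gap-sets on the longer initial segment forces it on the shorter.

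For the fusion clause (\ref{3.typeBdefin}), given a sequence $(p_n : n \in \bN)$ with $p_{n+1} \leq_n p_n$, I would take $p := \bigcup_n p_n$, which is automatically a partial $\bt$-selector since the $p_n$ form an $\supseteq$-increasing chain. The essential work is to show $\kg_p$ is staggered divergent and $p \leq_n p_n$ for every $n$. I would prove by induction on $m \geq n$ the stability identity
\begin{equation*}
    [i]_\zeta \setminus \dom p_m = [i]_\zeta \setminus \dom p_n \quad \text{for every } i \leq \mu_n(\kg_{p_n}),
\end{equation*}
whose inductive step relies on the observation that $\kg_{p_m}(\mu_n(\kg_{p_n})) = \kg_{p_n}(\mu_n(\kg_{p_n})) \geq n$ (by the inductive hypothesis and the earlier remark about $\mu_n$), which combined with $\kg_{p_m} \leq \kg_{p_n}$ (from $p_m \supseteq p_n$) pins down $\mu_n(\kg_{p_m}) = \mu_n(\kg_{p_n}) \leq \mu_m(\kg_{p_m})$; the hypothesis $p_{m+1} \leq_m p_m$ then propagates the stability from $m$ to $m+1$. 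Passing to the union, $\kg_p$ coincides with $\kg_{p_n}$ on $\{0, \dots, \mu_n(\kg_{p_n})\}$, so $\kg_p$ is non-decreasing; and $\mu_n(\kg_{p_n}) \to \infty$ as $n \to \infty$ (otherwise the sequence would be bounded by some $M$, giving $\kg_{p_n}(M) \geq n$ for all $n$, which contradicts $\kg_{p_n}(M) \leq |[M]_\zeta|$), so $\kg_p$ is divergent. The same stability identities directly give $p \leq_n p_n$.

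The main obstacle is clause (\ref{4.typeBdefin}), the pure-decision lemma. Given $q$, $n$, and a name $\tau$ with $q \Vdash \tau \in V$, I would carry out a Silver-style fusion. Since each $t_i$ is finite, there are only finitely many admissible ``completions'' $s_1, \dots, s_r$ of $q$ on the interval $[\mu_n(\kg_q)+1]_\zeta$, i.e.\ partial $\bt$-selectors $s$ with $\dom s \subseteq [\mu_n(\kg_q)+1]_\zeta \setminus \dom q$ such that $q \cup s$ is a valid condition of $\bP(\bt)$. Iterating through $j = 1, \dots, r$, I would build a descending chain $q = q^{(0)} \geq q^{(1)} \geq \dots \geq q^{(r)}$ with each $q^{(j)} \leq_n q$, such that $q^{(j)} \cup s_j \Vdash \tau = x_j$ for some $x_j \in V$: at step $j$, refine $q^{(j-1)} \cup s_j$ to an extension deciding $\tau$, then detach the $s_j$-portion and reattach only the additions in intervals $[i]_\zeta$ with $i > \mu_n(\kg_q)+1$, using the freedom granted by $\leq_n q$. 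The key point, verified using that the density sets of ``conditions fully defining a chosen interval'' are indeed dense in $\bP(\bt)$ (one can always fill an interval after filling all earlier intervals to keep $\kg$ non-decreasing), is that the generic completion over $[\mu_n(\kg_q)+1]_\zeta$ must realize one of the $s_j$; therefore $q^{(r)} \Vdash \tau \in \{x_1, \dots, x_r\}$, providing the finite set $H$ required by Axiom $\B$. The technical subtlety lies in checking that the detach-reattach operation at each step yields a genuine condition of $\bP(\bt)$ still below $q^{(j-1)}$ in the $\leq_n q$ ordering.
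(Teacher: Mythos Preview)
Your treatment of clauses (\ref{1.typeBdefin})--(\ref{3.typeBdefin}) is correct and essentially the same as the paper's, with the stability induction spelled out in more detail than the paper bothers with.

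Clause (\ref{4.typeBdefin}), however, has a genuine gap: you are enumerating completions on the wrong set. The relation $p\leq_n q$ freezes the gaps of $q$ on the intervals $[0]_\zeta,\dots,[\mu_n(\kg_q)]_\zeta$; call this frozen gap-set $F:=\bigcup_{i\leq\mu_n(\kg_q)}[i]_\zeta\setminus\dom q$. Your completions $s_j$ live on $[\mu_n(\kg_q)+1]_\zeta\setminus\dom q$, which is \emph{disjoint} from $F$. Now look at your step $j$: you refine $q^{(j-1)}\cup s_j$ to some $r$ deciding $\tau$, and then keep only the additions $r$ made in intervals beyond $\mu_n(\kg_q)+1$. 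But $r$ may well have filled some of the frozen gaps in $F$ (nothing stops it), and by discarding those fills you lose $q^{(j)}\cup s_j\leq r$. So the claim $q^{(j)}\cup s_j\Vdash``\tau=x_j"$ is simply not justified by your construction. The final sentence, that the generic completion on $[\mu_n(\kg_q)+1]_\zeta$ must match some $s_j$, is true but useless without that claim.

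The paper's argument fixes exactly this by enumerating the partial $\bt$-selectors $s_0,\dots,s_{l-1}$ with domain $F$ itself. One then threads through the $s_k$'s as you describe, at each step replacing the $F$-part of the previous decider by the next $s_k$ and deciding again; finally one strips the $F$-coordinates from $p_{l-1}$ to obtain $p\leq_n q$. Now any $r\leq p$ can be extended so that $F\subseteq\dom r$, and its restriction to $F$ is some $s_k$, whence $r\leq p_k$ and $r\Vdash``\tau=x_k"$. The point is that the finite branching you must control is over the \emph{frozen} gaps, not over the next free interval. Your detach--reattach machinery is the right idea; it is only the choice of the set on which to enumerate completions that needs to be changed from $[\mu_n(\kg_q)+1]_\zeta\setminus\dom q$ to $F$.
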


\begin{proof}
The only items in Definition \ref{typeBdefin} that require a proof are $\eqref{3.typeBdefin}$ and $\eqref{4.typeBdefin}$:

$\eqref{3.typeBdefin}$ Let $(p_n:n\in\bN)$ be a sequence of conditions such that $p_{n+1}\leq_n p_n$, and let
\begin{equation*}
    p:=\bigcup_{n\in\bN}p_n.    
\end{equation*}
If it turns out that $p\in\bP(\bt)$, then $p\leq_n p_n$ for all $n\in\bN$. Thus, it is enough to prove that the gap-counting function $\kg_p$ is staggered divergent:

$(3.1)$ To see that $\kg_p$ is non-decreasing, let $m\in\bN$ be fixed. Since
\begin{equation*}
    \lim_{n\to\infty}\mu_n(\kg_{p_n})=\infty,
\end{equation*}
we can choose some $n\in\bN$ such that $m\leq\mu_n(\kg_{p_n})$. Note that for every $i\leq\mu_n(\kg_{p_n})$, it holds that $[i]_\zeta\setminus\dom p_n=[i]_\zeta\setminus\dom p$. In particular, we have that if $i\leq\mu_n(\kg_{p_n})$, then $\kg_p(i)=\kg_{p_n}(i)$. This implies that $\kg_p$ is non-decreasing in the interval $[0,m]$. Since this holds for every $m\in\bN$, $\kg_p$ is non-decreasing.

$(3.2)$ Let us check now that $\kg_p$ is divergent. Fix a non-negative integer $n\in\bN$. As before, for every $i\leq\mu_n(\kg_{p_n})$, we have that $\kg_p(i)=\kg_{p_n}(i)$. Thus,
\begin{equation*}
    \kg_p(\mu_n(\kg_{p_n}))=\kg_{p_n}(\mu_n(\kg_{p_n}))\geq n.    
\end{equation*}
Since $n\in\bN$ was arbitrary, and $\kg_p$ is non-decreasing, we have that $\kg_p$ is divergent.

$\eqref{4.typeBdefin}$ Let $q\in\bP(\bt)$ be such that $q\Vdash``\tau\in V"$, fix $n\in\bN$, and set
\begin{equation*}
    F:=\bigcup_{i\leq\mu_n(\kg_q)}[i]_\zeta\setminus\dom q.
\end{equation*}
Let also $\{s_k:k<l\}$ be the finite set of partial $\bt$-selectors with domain $F$.

We will define $p\leq_n q$ recursively: Start by choosing a condition $q_0\leq q$ such that $\dom q\cup F\subseteq\dom q_0$ and $q_0\upharpoonright F=s_0$. Then choose some $p_0\leq q_0$ such that there exists some $x_0\in V$ such that $p_0\Vdash``\tau=x_0"$. Now, if $k\geq 1$, and we already defined $p_i$, for $i<k$, let $q_k\leq q$ be such that $\dom q_k=\dom p_{k-1}$, $q_k\upharpoonright F=s_k$, and $q_k\upharpoonright(\dom q_k\setminus F)=p_{k-1}\upharpoonright(\dom p_{k-1}\setminus F)$. Then choose some $p_k\leq q_k$ such that there exists some $x_k\in V$ such that $p_k\Vdash``\tau=x_k"$. Finally, let $p:=p_{l-1}\upharpoonright(\dom p_{l-1}\setminus F)$. Clearly, $p\leq_n q$. Now, if $H:=\{x_k:k<l\}\in V$, and $r\leq p$ is arbitrary, we can find a further extension $s\leq r$ such that $F\subseteq\dom s$. If $s\upharpoonright F=s_k$, then $s\leq p_k$, and therefore $s\Vdash``\tau=x_k"$. Thus, $p\Vdash``\tau\in H"$, as required. 
\end{proof}

\section{A proof of the main result}\label{BJ}

In this section we will offer a proof of Theorem \ref{mainintro}.

\begin{defin}
Let $\pi\colon\bN\to\bN$ be a partition into finite sets. A subset $X$ of the topological space $Y$ is called \emph{$\pi$-supernull} if for every sequence $(A_n:n\in\bN)$ of $\omega$-covers of $Y$, there exists an infinite collection of open sets $\{U_n:n\in\bN\}$ such that $U_n\in A_n$ for every $n\in\bN$, and every element of $X$ is an element of all but finitely many elements of the set
    \begin{equation*}
        \left\{\bigcup\left\{U_k:k\in[n]_\pi\right\}:n\in\bN\right\}.
    \end{equation*}
\end{defin}

\begin{prop}\label{bacaneria}
If $X$ is a subset of the Cantor space $\bC$ such that the selection principle $\sone{\Omega[\bC]}{G\Lambda[X|\bC]}$ holds, then $X$ has sharp measure zero. In particular, every $\pi$-supernull subset of $\bC$ has sharp measure zero.
\end{prop}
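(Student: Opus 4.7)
The plan is to take an arbitrary positive sequence $(\varepsilon_\ell:\ell\in\bN)$, which without loss of generality may be assumed monotonically decreasing to zero, and to construct an open cover of $X$ together with a partition witnessing sharp measure zero. First, I would fix an interval partition $\bN=\bigsqcup_n J_n$ with the lengths $|J_n|$ growing rapidly, and depths $m_n$ such that $2^{-m_n}\leq\min_{\ell\in J_n}\varepsilon_\ell$; for each $n$ I would set $A_n$ to be the collection of proper open subsets of $\bC$ that are finite unions of basic clopens of depth at least $m_n$. Each $A_n$ is an $\omega$-cover of $\bC$: given a finite $F\subseteq\bC$, one verifies that $\bigcup_{x\in F}[x\upharpoonright m]\in A_n$ provided $m\geq m_n$ is chosen large enough that $2^m>|F|$.

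Next, I would apply the hypothesis $\sone{\Omega[\bC]}{G\Lambda[X|\bC]}$ to the sequence $(A_n)$ to obtain $W_n\in A_n$ such that $\{W_n:n\in\bN\}$ is a $\lambda$-groupable cover of $X|\bC$, witnessed by a partition $\rho$ of $\bN$ into finite sets. Writing each $W_n=\bigcup_{j<j_n}[s_{n,j}]$ with $|s_{n,j}|\geq m_n$, every piece $[s_{n,j}]$ has diameter at most $2^{-m_n}\leq\varepsilon_\ell$ for every $\ell\in J_n$, and by $\lambda$-groupability, the family $\bigl\{\bigcup_{n\in[q]_\rho}W_n:q\in\bN\bigr\}$ pre-$\gamma$-covers $X$.

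The main technical step is to re-index the countable family of pieces $\{[s_{n,j}]\}_{n,j}$ as $\{U_\ell:\ell\in\bN\}$ with $\diam U_\ell\leq\varepsilon_\ell$, setting $U_\ell:=\emptyset$ for slots that do not receive a piece. A natural matching places each piece of $W_n$ into a slot in $J_n$; when $j_n>|J_n|$ one invokes Hall's marriage theorem on the bipartite graph in which piece $(n,j)$ is joined to slot $\ell$ if and only if $\varepsilon_\ell\geq 2^{-m_n}$. The Hall condition is to be ensured by choosing the $|J_n|$ from the outset to grow fast enough to absorb the pieces contributed at each diameter scale. The partition $\pi'$ of $\bN$ into finite sets, obtained by grouping, for each $q$, the slots used by pieces of $W_n$ for $n\in[q]_\rho$ together with any unused slots of the corresponding $J_n$, then satisfies $\bigcup_{\ell\in[q']_{\pi'}}U_\ell=\bigcup_{n\in[q']_\rho}W_n$, so the unions $\gamma$-cover $X$ and sharp measure zero is witnessed.

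I expect the main obstacle to be controlling the piece counts $j_n$ of the selected $W_n$, which are not directly bounded by the selection principle; this forces a delicate coordination between the initial choice of $(J_n)$, $(m_n)$, and the matching step, possibly combined with a preliminary replacement of $(\varepsilon_\ell)$ by a sparser subsequence. The \emph{in particular} clause is immediate: if $X$ is $\pi$-supernull for some partition $\pi$ of $\bN$ into finite sets, then for every sequence of $\omega$-covers of $\bC$ the selection furnished by $\pi$-supernullity produces a $\lambda$-groupable cover witnessed by $\pi$ itself, so $\sone{\Omega[\bC]}{G\Lambda[X|\bC]}$ holds and the first part of the proposition applies.
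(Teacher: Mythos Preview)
Your outline has a genuine gap, precisely at the point you flag as the ``main obstacle'': the matching step cannot be made to work as stated. You need to place the pieces of each selected $W_n$ into slots $\ell$ with $\diam[s_{n,j}]\leq\varepsilon_\ell$, but the piece count $j_n$ is determined only \emph{after} the selection principle is applied, while the slot blocks $J_n$ (and the depths $m_n$) must be fixed \emph{before}. No growth rate for $|J_n|$ chosen in advance can guarantee Hall's condition, because $W_n$ is an arbitrary element of $A_n$ and may have arbitrarily many clopen constituents of depth $\geq m_n$; nothing prevents the selected $W_n$ from needing more pieces than the number of slots $\ell$ with $\varepsilon_\ell\geq 2^{-m_n}$ that are still available. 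Passing to a sparser subsequence of $(\varepsilon_\ell)$ does not help, since the selection is still made afterwards. A secondary issue is that your pieces $[s_{n,j}]$ need not be distinct across different $n$, which complicates defining the partition on the resulting cover as a \emph{set}.

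The paper's proof avoids the matching problem entirely by building the indexing into the $\omega$-covers themselves. It partitions $\bN$ into infinite pieces $[n]_\rho$ and takes $A_n$ to consist of finite unions of open sets whose diameters lie in the pairwise disjoint bands $(\varepsilon_{m+1},\varepsilon_m]$ for $m\in[n]_\rho$. Each atomic constituent of a selected $V_n$ then already carries its own natural number label $m\in[n]_\rho$ (determined by which band its diameter falls in), these labels are automatically pairwise distinct both within and across the $V_n$'s, and each constituent satisfies the correct diameter bound for its label. No matching, no Hall condition, no advance bound on piece counts is needed. The distinct-diameters trick also guarantees all constituents are distinct as sets, so the $\lambda$-groupable structure passes cleanly to the collection of constituents. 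Your ``in particular'' clause is fine.
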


\begin{proof}
Let $(\varepsilon_n:n\in\bN)$ be a sequence of positive real numbers. Without loss of generality, let us assume that $\varepsilon_{n+1}<\varepsilon_n$ for all $n\in\bN$, and that $\varepsilon_n\to 0$ as $n\to\infty$. Let $B_n$ be the collection of all open subsets $U$ of $\bC$ such that
\begin{equation*}
    \varepsilon_{n+1}<\diam U\leq\varepsilon_n.
\end{equation*}
Let also $\rho\colon\bN\to\bN$ be a partition such that every $[n]_\rho$ is infinite, and let $A_n$ be the set of finite unions $U_{m_{n,0}}\cup\dots\cup U_{m_{n,k-1}}$ such that:
\begin{enumerate}
    \item $m_{n,0}<m_{n,1}<\dots<m_{n,k-1}$ are all elements of $[n]_\rho$.
    \item $U_{m_{n,i}}\in B_{m_{n,i}}$ for every $i<k$.
\end{enumerate}

Since the sequence $\varepsilon_n\to 0$ as $n\to\infty$, we may assume that no $A_n$ has $\bC$ as an element, and therefore every $A_n$ is an $\omega$-cover of $\bC$. Using the selection principle $\sone{\Omega[\bC]}{G\Lambda[X|\bC]}$, choose open sets $V_n\in A_n$ such that $\{V_n:n\in\bN\}\in G\Lambda[X|\bC]$. Now, for each $n\in\bN$, let $k(n)\in\bN$ be such that $V_n=U_{m_{n,0}}\cup\dots\cup U_{m_{n,k(n)-1}}$ in a way such that $m_{n,0}<\dots<m_{n,k(n)-1}$ are all elements of $[n]_\rho$, and $U_{m_{n,i}}\in B_{m_{n,i}}$ for every $i<k(n)$. Since all the $U_{m_{n,i}}$'s have different diameters (in particular they are different), the set $\left\{U_{m_{n,i}}:n\in\bN\,\&\,i<k(n)\right\}$ belongs to $G\Lambda[X|\bC]$. The argument finishes by recalling that $\diam U_{m_{n,i}}\leq\varepsilon_n$.
\end{proof}

The classes $\sN^+(\bR)$ and $\sN^\star(\bR)$ may differ in many models of set theory: If the continuum hypothesis holds, for example, then by \cite[Theorem 2.1]{millerreals} there exists a Luzin set, which is a strong measure zero set that is not meager, and therefore is not meager-additive. Nevertheless, the classes $\sN^+(\bR)$ and $\sN^\star(\bR)$ cannot be \emph{extremely different}. More precisely, it cannot be the case that every meager-additive set of reals in countable, while there exists an uncountable strong measure zero set.

Recall that $\kb$ is the minimal cardinality of a set $\sF$ of functions $f\colon\bN\to\bN$ such that for every $g\colon\bN\to\bN$ there exists some $f\in\sF$ such that there are infinitely many $n\in\bN$ with $g(n)<f(n)$.

\begin{cor}\label{BC}
The following are equivalent.
\begin{enumerate}
    \item Every strong measure zero set of reals is countable.
    \item Every meager-additive set of reals is countable.
\end{enumerate}
\end{cor}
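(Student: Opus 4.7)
The direction $(1)\Rightarrow(2)$ is immediate from Proposition \ref{sigmaideal}: every meager-additive set of reals lies in $\sN^+(\bR)$ and is therefore strong measure zero, so countable by $(1)$.

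For $(2)\Rightarrow(1)$ I would argue the contrapositive. Suppose $X\subseteq\bR$ is uncountable strong measure zero. After a translation I may assume $X\subseteq[0,1]$, and by Proposition \ref{weisszind} the set $Y_0:=c^{-1}(X)\subseteq\bC$ is uncountable strong measure zero. It then suffices to exhibit an uncountable meager-additive subset $Z\subseteq\bC$: since only countably many reals are dyadic rationals, $c^{-1}(c(Z))$ differs from $Z$ by a countable set and hence still has sharp measure zero (Proposition \ref{sigmaideal}), so $c(Z)\subseteq[0,1]\subseteq\bR$ is uncountable and meager-additive by Proposition \ref{weisszind}, contradicting $(2)$.

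The main tool is a lemma: every strong measure zero $Y\subseteq\bC$ with $|Y|<\kb$ has sharp measure zero, hence is meager-additive by Zindulka's theorem. To prove the lemma, given a decreasing positive sequence $(\varepsilon_n)_n$ with $\varepsilon_n\to 0$, for each $m\in\bN$ I apply strong nullity to the shifted sequence $(\varepsilon_n)_{n\geq m}$ to obtain an open cover $\{V_n^{(m)}:n\geq m\}$ of $Y$ with $\diam V_n^{(m)}\leq\varepsilon_n$. For every $y\in Y$ I set $h_y(m):=\min\{n\geq m:y\in V_n^{(m)}\}$. Since $|\{h_y:y\in Y\}|<\kb$, there is $g\colon\bN\to\bN$ with $h_y\leq^{*}g$ for every $y\in Y$. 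Recursively picking $m_0<m_1<\cdots$ with $m_{i+1}>g(m_i)$ makes the intervals $I_i=[m_i,g(m_i)]$ pairwise disjoint. The cover $\mathcal{A}=\{V_n^{(m_i)}:n\in I_i,\,i\in\bN\}$, indexed as $\{U_k:k\in\bN\}$ by $U_k:=V_k^{(m_i)}$ when $k\in I_i$ and $U_k:=\emptyset$ otherwise, has $\diam U_k\leq\varepsilon_k$ thanks to the disjointness of the $I_i$; the partition that puts every $k\in[m_i,m_{i+1})$ in the $i$-th piece has finite pieces, and each $y\in Y$ is eventually in $V_{h_y(m_i)}^{(m_i)}\subseteq\bigcup\{U_k:k\in[m_i,m_{i+1})\}$, witnessing sharp measure zero.

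Now pick $Y\subseteq Y_0$ with $|Y|=\aleph_1$; it is still strong measure zero. If $\kb>\aleph_1$ the lemma applies directly to $Y$ and delivers the desired uncountable meager-additive set. The main obstacle is the remaining case $\kb=\aleph_1$, where the lemma no longer applies to $Y$ itself; I would then fix a $\leq^{*}$-unbounded family $\{f_\alpha:\alpha<\omega_1\}$ of strictly increasing functions witnessing $\kb=\aleph_1$ and build an uncountable meager-additive set in $\bC$ directly, by encoding each $f_\alpha$ as a $0,1$-sequence $x_\alpha$ supported on the sparse set $\{f_\alpha(k):k\in\bN\}$ and using the unboundedness of $\{f_\alpha\}$ to verify Bartoszy\'nski's combinatorial characterization of meager-additivity for $\{x_\alpha:\alpha<\omega_1\}$, with the all-zero sequence as witness.
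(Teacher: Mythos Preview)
Your proof follows the same case split on $\kb$ as the paper, and the case $\kb=\aleph_1$ is handled by the same idea (Bartoszy\'nski's construction of an uncountable meager-additive set). The difference lies in the case $\kb>\aleph_1$: the paper takes the $\aleph_1$-sized strong measure zero set, observes it has the Hurewicz property (since its cardinality is below $\kb$), and then runs through a chain of citations---Fremlin--Miller for $\sone{O}{O}$, Nowik--Scheepers--Weiss for $\sone{\Omega}{G\Lambda}$, and Proposition~\ref{bacaneria} for sharp measure zero. Your lemma that ``strong measure zero plus $|Y|<\kb$ implies sharp measure zero'' is a direct, self-contained shortcut for exactly this implication; it bypasses the selection-principle machinery entirely and is arguably more transparent. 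The transfer between $\bC$ and $\bR$ via Proposition~\ref{weisszind} is routine and both proofs need it in some form.

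One caution about your $\kb=\aleph_1$ sketch: an \emph{arbitrary} unbounded family $\{f_\alpha:\alpha<\omega_1\}$ will not do. If some $f_\alpha$ happens to hit every interval of a given interval partition, then the characteristic function $x_\alpha$ of its range is nowhere equal to the zero sequence on a full block, and Bartoszy\'nski's characterization fails for that $x_\alpha$ with witness $y=0$. Bartoszy\'nski's actual construction in \cite{barto03} uses a $\leq^*$-well-ordered scale and a more careful encoding to guarantee the required agreement on blocks. Your sketch points at the right theorem, but the specific encoding you wrote down does not work without additional hypotheses on the $f_\alpha$; either tighten the construction or, as the paper does, simply cite \cite[Theorem~2(1)]{barto03}.
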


\begin{proof}
It is enough to prove that $\neg(1)\Rightarrow\neg(2)$. We proceed by cases:
\begin{enumerate}
    \item[(i)] If $\kb=\aleph_1$, it is a result of Bartoszy\'nski (see \cite[Theorem 2 (1)]{barto03}) that there exists an uncountable meager-additive subset of the real line.
    \item[(ii)] If $\kb>\aleph_1$, let $X$ be a strong measure zero set of reals with $|X|=\aleph_1$. Since $\kb>\aleph_1$, $X$ has the Hurewicz property (see \cite{hur27}). By \cite[Theorem 8]{fremmillsel}, $X$ is a set of reals that has the Hurewicz property and such that the selection principle $\sone{O[X]}{O[X]}$ holds. By \cite[Theorem 14]{nowschweiss}, this implies that the selection principle $\sone{\Omega[\bR]}{G\Lambda[X|\bR]}$ holds. Therefore, by Proposition \ref{bacaneria}, $X$ has sharp measure zero, and is meager-additive.
\end{enumerate}
This concludes the proof.
\end{proof}

Corollary \ref{BC} implies that the classes $\sN^+(\bR)$ and $\sN^\star(\bR)$ are provably in $\zfc$ \emph{not extremely different} in the sense that it cannot be case that $\sN^\star(\bR)$ coincides with the class of countable subsets of $\bR$, while there exists an uncountable strong measure zero set. The Bartoszy\'nski--Judah problem asks whether the classes of meager-additive sets and strong measure zero sets are \emph{intrinsically distinct} in the sense that they can only be equal if they are trivially equal, i.e., if $\sN^+(\bR)$ and $\sN^\star(\bR)$ coincide, this is because $\sN^\star(\bR)=\sN^+(\bR)=[\bR]^{\leq\aleph_0}$. As announced in the introduction, we will give a negative answer to this question by building a model of set theory in which the notions of meager-additivity and strong nullity coincide, yet there exists an uncountable strong measure zero set.

\begin{theo}\label{main}
If $\zfc$ has a model, then it has a model in which:
\begin{enumerate}
    \item\label{1.main} Every strong measure zero subset of $\bC$ is $\zeta$-supernull.
    \item\label{2.main} There are no unbounded reals over $L$.
    \item\label{3.main} $\sN^+(\bC)=[\bC]^{\leq\aleph_1}$.
    \item\label{4.main} $\kc=\aleph_2$.
\end{enumerate}
\end{theo}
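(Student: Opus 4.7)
My plan is to iterate forcing notions of the form $\bP(\bt)$ with countable support over $L$ for $\omega_2$ stages, using a $\Diamond_{\omega_2}$-sequence (available since $L\models\Diamond_{\omega_2}$) to bookkeep every sequence of $\omega$-covers of $\bC$ that appears in the iteration. By Theorem \ref{typeB} each iterand satisfies Axiom $\B$, so Corollary \ref{iteraomegabound} ensures the full iteration $\bP_{\omega_2}$ is proper and $\omega^\omega$-bounding. This immediately gives (2), as no real in $V^{\bP_{\omega_2}}$ is unbounded over $L$. Standard arguments---properness, $\ch$ in $L$, each iterand having size $\leq\aleph_1$ in its intermediate model, and the proper iteration theorem---show that $\ch$ is preserved at every intermediate stage, that $\bP_{\omega_2}$ has the $\aleph_2$-c.c., and that $\aleph_2$ reals are added in total, giving (4); combining this with Proposition \ref{itershe} yields (3).

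For (1), the $\Diamond_{\omega_2}$-sequence is used to assign to each stage $\alpha<\omega_2$ a $\bP_\alpha$-name for a sequence $(A^\alpha_n)_{n\in\bN}$ of countable $\omega$-covers of $\bC$ (any $\omega$-cover of $\bC$ admits a countable $\omega$-subcover, by separable metrizability of $\bC$), arranged so that every such sequence in $V^{\bP_{\omega_2}}$ is captured as $(A^\alpha_n)$ for $\alpha$ cofinal in $\omega_2$. In $V^{\bP_\alpha}$, I take $\bt_\alpha=(t^\alpha_n)$ where each $t^\alpha_n\subseteq A^\alpha_n$ is any finite subcover of $\bC$: such a subcover exists by compactness of $\bC$ together with $\bigcup A^\alpha_n\supseteq\bC$, and has at least two elements since no element of an $\omega$-cover equals $\bC$.

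The key technical claim is that, for every $x\in\bC\cap V^{\bP_\alpha}$, the set
\begin{equation*}
D_x:=\left\{p\in\bP(\bt_\alpha):\exists N\,\forall n\geq N\,\exists k\in[n]_\zeta\cap\dom p,\ x\in p(k)\right\}
\end{equation*}
is dense in $\bP(\bt_\alpha)$. To see this, given $q\in\bP(\bt_\alpha)$, pick $N$ to be a jump point of the non-decreasing divergent function $\kg_q$ beyond which $\kg_q(n)\geq 1$; for each $n\geq N$, choose $k_n\in[n]_\zeta\setminus\dom q$ and $U_n\in t^\alpha_{k_n}$ with $x\in U_n$ (possible since $\bigcup t^\alpha_{k_n}=\bC$). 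Setting $p:=q\cup\{(k_n,U_n):n\geq N\}$, the new gap function $\kg_p$ agrees with $\kg_q$ on $[0,N)$ and equals $\kg_q-1$ on $[N,\infty)$, which remains non-decreasing (by the jump at $N$) and divergent; so $p\in\bP(\bt_\alpha)$, $p\leq q$, and $p\in D_x$. Since $V^{\bP_\alpha}\models\ch$, there are only $\aleph_1$-many such $D_x$'s, and the $V^{\bP_\alpha}$-generic filter for $\bP(\bt_\alpha)$ meets every one of them; writing $U_k:=f_\alpha(k)\in A^\alpha_k$ for $k\in\dom f_\alpha$ (and any fixed element of $A^\alpha_k$ otherwise), the sequence $(U_k)$ witnesses that every $X\subseteq\bC$ in $V^{\bP_\alpha}$ is $\zeta$-supernull with respect to $(A^\alpha_n)$.

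Given an arbitrary strong measure zero $X\in V^{\bP_{\omega_2}}$ (so $|X|\leq\aleph_1$ by (3)) and any sequence $(A_n)\in V^{\bP_{\omega_2}}$ of $\omega$-covers of $\bC$, both first appear in some $V^{\bP_\gamma}$ with $\gamma<\omega_2$, and the cofinal bookkeeping provides $\alpha\geq\gamma$ with $(A^\alpha_n)=(A_n)$; then the witness $(U_k)\in V^{\bP_{\alpha+1}}\subseteq V^{\bP_{\omega_2}}$ gives $\zeta$-supernullity of $X$ with respect to $(A_n)$, so $X$ is $\zeta$-supernull since $(A_n)$ was arbitrary. The main obstacle is the density verification above: a finite extension of $q$ would at best establish a $\lambda$-cover property at $x$, not the pre-$\gamma$-cover required for $\zeta$-supernullity; the idea is to extend $q$ simultaneously on infinitely many late $\zeta$-intervals while preserving the staggered-divergent gap condition, which works precisely because $\kg_q$ has infinitely many jump points.
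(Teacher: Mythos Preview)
Your proposal is correct and follows essentially the same route as the paper's own proof: a countable-support $\omega_2$-iteration over $L$ of the forcings $\bP(\bt_\alpha)$, bookkeeping (the paper uses plain dovetailing rather than $\Diamond_{\omega_2}$, but this is immaterial) to capture every sequence of countable $\omega$-covers cofinally, and the same density argument---your key claim is exactly Lemma~\ref{densityQ}(2), the paper taking $N=\mu_1(\kg_q)$ where you take a jump point of $\kg_q$. One minor omission: the paper chooses the finite subcovers $t_n\subseteq A_n$ to be \emph{pairwise disjoint} (using Lemma~\ref{omegaone}), which makes the generic selector injective and hence guarantees that $\{U_n:n\in\bN\}$ is genuinely infinite as the definition of $\zeta$-supernull demands; your arbitrary finite subcovers do not ensure this, though the fix is immediate, and your appeal to $\ch$ to count the $D_x$'s is unnecessary since genericity already meets every dense set in $V^{\bP_\alpha}$.
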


It is worth mentioning that the relative consistency of $\sN^+(\bC)=[\bC]^{\leq\aleph_1}$ was first proved by Corazza in \cite{cora89}, and then independently reproved by Goldstern, Judah, and Shelah in \cite{goljudshe}. Before getting lost in the details of Theorem \ref{main}, let us show how it can be used to prove Theorem \ref{mainintro}.

\begin{proof}[Proof of Theorem \ref{mainintro}]
Let $M$ be a model of set theory in which all the items in Theorem \ref{main} hold. By Proposition \ref{bacaneria}, $\sN^+(\bC)=\sN^\star(\bC)$. Now, if $X$ is a strong measure zero set of real numbers that it is not meager-additive, since both $\sN^+(\bR)$ and $\sN^\star(\bR)$ are $\sigma$-ideals (cf. Poposition \ref{sigmaideal}), we may assume that $X\subseteq[0,1]$. By Proposition \ref{weisszind}, $c^{-1}(X)\in\sN^+(\bC)\setminus\sN^\star(\bC)$; contradiction. An analogous argument proves that, in $M$, there are uncountable strong measure zero sets of the reals.
\end{proof}

Let $\bT=(A_n:n\in\bN)$ be some sequence of $\omega$-covers of the Cantor space. We will construct a sequence of finite sets $\hat\bT:=(t_n:n\in\bN)$ such that each $t_n\subseteq A_n$ is a cover of $\bC$, and all the $t_n$'s are pairwise disjoint: Since $\bC$ is compact, let $t_0\subseteq A_0$ be a finite cover of $\bC$. For every $n\geq 1$, if we already defined $t_k$ for all $k<n$, let
\begin{equation*}
    B_n:=A_n\setminus\bigcup_{k<n}t_k.
\end{equation*}
By Lemma \ref{omegaone}, $B_n$ is an $\omega$-cover of $\bC$, so there exists a finite set $t_n\subseteq B_n$ that covers $\bC$. This finishes the construction.

Fix a sequence $\bT$ of $\omega$-covers of $\bC$, and set $\hat\bT$ as described above.

\begin{lem}\label{densityQ}
For every $n\in\bN$, and every $x\in\bC$, the sets
\begin{align*}
    D_n:= & \left\{p\in\bP(\hat\bT):n\in\dom p\right\}\quad\text{and}\\
    E_x:= & \left\{p\in\bP(\hat\bT):\left(\exists N\in\bN\right)\left(\forall n\geq N\right)\left(\exists k\in[n]_\zeta\cap\dom p\right)\left(x\in p(k)\right)\right\}
\end{align*}
are dense open subsets of $\bP(\hat\bT)$.
\end{lem}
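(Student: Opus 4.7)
The plan is to handle openness and density of $D_n$ and $E_x$ separately; openness will be immediate, while density will require a small construction that respects the staggered-divergence condition on $\kg_p$. For openness, recall that $p \leq q$ in $\bP(\hat\bT)$ means $p \supseteq q$, so that every domain point of $q$ is a domain point of $p$ with the same value. Hence, if $n \in \dom q$ then $n \in \dom p$ (giving openness of $D_n$), and if $q \in E_x$ with witness $N$ then for each $n \geq N$ any $k \in [n]_\zeta \cap \dom q$ with $x \in q(k)$ remains a witness in $p$ (giving openness of $E_x$).

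For density of $D_n$, I would take an arbitrary $q \in \bP(\hat\bT)$ with $n \notin \dom q$ and set $m = \zeta^{-1}(n)$. My plan is to extend $q$ by filling in the entire initial block $\bigcup_{k \leq m}[k]_\zeta = [0, (m+1)(m+2)/2)$: define
\[
    p := q \cup \{(j, u_j) : j \in [0, (m+1)(m+2)/2) \setminus \dom q\},
\]
where each $u_j \in t_j$ is chosen arbitrarily (possible because $t_j$ is a nonempty cover of $\bC$). Then $n \in \dom p$, while $\kg_p$ equals $0$ on $\{0, \ldots, m\}$ and agrees with $\kg_q$ from $m+1$ onward; hence it is non-decreasing and divergent, so $p \in D_n$ and $p \leq q$.

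For density of $E_x$, I would combine this filling trick with a systematic choice of $x$-covering witnesses. Given $q$, pick $N \geq \mu_1(\kg_q)$, so that $\kg_q(n) \geq 1$, and therefore $[n]_\zeta \setminus \dom q \neq \emptyset$, for every $n \geq N$. For each $n \geq N$, choose some $k_n \in [n]_\zeta \setminus \dom q$, and using that $t_{k_n}$ covers $\bC$, pick $U_n \in t_{k_n}$ with $x \in U_n$. For $n < N$ (finitely many), fill $[n]_\zeta$ completely as in the $D_n$ case. Setting
\[
    p := q \cup \{(j, u_j) : j \in [0, N(N+1)/2) \setminus \dom q\} \cup \{(k_n, U_n) : n \geq N\},
\]
one gets $\kg_p(k) = 0$ for $k < N$ and $\kg_p(n) = \kg_q(n) - 1 \geq 0$ for $n \geq N$; this is non-decreasing (the jump at $N$ is from $0$ to $\kg_q(N) - 1 \geq 0$) and divergent, while each $[n]_\zeta$ with $n \geq N$ contains the required covering witness $k_n$, so $p \in E_x$ with witness $N$.

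The main (if modest) obstacle is the bookkeeping required to keep $\kg_p$ non-decreasing after shrinking the complement of $\dom p$: adding even one point to $\dom p$ inside some interval $[m]_\zeta$ can violate monotonicity against earlier intervals where $\kg_q$ equals $\kg_q(m)$. Filling in an entire initial segment of intervals is a slightly wasteful but clean way to sidestep this issue in both density arguments.
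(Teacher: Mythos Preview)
Your argument is correct and follows the same overall strategy as the paper: openness is immediate from $p\supseteq q$, and density is obtained by adding carefully chosen points to $\dom q$ while preserving staggered divergence of the gap-counting function.

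The only difference is tactical. For $D_n$ the paper is more frugal: with $n\in[m]_\zeta$, it locates the ``plateau'' $\{m-i:i<l\}$ on which $\kg_q$ takes the constant value $\kg_q(m)$, and removes exactly one gap from each of those intervals (choosing $k_0=n$), so that $\kg_p=\kg_q-1$ on the plateau and $\kg_p=\kg_q$ elsewhere. For $E_x$ the paper simply takes $N=\mu_1(\kg_q)$ and adds only the points $\{k_n:n\geq N\}$, skipping your initial-segment filling entirely; this already gives $\kg_p(n)=\kg_q(n)=0$ for $n<N$ and $\kg_p(n)=\kg_q(n)-1$ for $n\geq N$. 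Your device of zeroing out an entire initial block of intervals is a legitimate and arguably cleaner alternative: it sacrifices minimality of the extension for a one-line verification of monotonicity, exactly as you note in your closing remark.
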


\begin{proof}
That all the $D_n$'s and all the $E_x$'s are open is obvious, so we only need to check the density.
\begin{enumerate}
    \item Let $q\in\bP(\hat\bT)$ be a condition, and let $n\in\bN$ be fixed. We are looking for a partial $\hat\bT$-selector $p$ extending $q$, with $n\in\dom p$, and such that the gap-counting $\kg_p$ is still monotone. If $n\in\dom q$, there is nothing to prove. Otherwise, let $m\in\bN$ be such that $n\in[m]_\zeta$, and let $\{m-i:i<l\}$ be the set of all the non-negative integers such that $\kg_q(m)=\kg_q(m-i)$. For each $i<l$, choose $k_i\in[m-i]_\zeta\setminus\dom q$, with $k_0=n$, and choose arbitrary open sets $U_{k_i}\in t_{k_i}$. Setting $p:=q\cup\{(k_i,U_{k_i}):i<l\}$, we obtain that $p\in\bP(\hat\bT)$ is the desired condition.
    \item Let $q\in\bP(\hat\bT)$, and let $x\in\bC$ be fixed. For each $n\geq\mu_1(\kg_q)$, choose some $k_n\in[n]_\zeta\setminus\dom q$. Since each $t_{k_n}$ is an open cover of $\bC$, there must exist open sets $U_{k_n}\in t_{k_n}$ such that $x\in U_{k_n}$ for all $n\in\bN$. If we define
    \begin{equation*}
        p:=q\cup\left\{\left(k_n,U_{k_n}\right):n\geq\mu_1(\kg_q)\right\},    
    \end{equation*}
    then the condition $p\in\bP(\hat\bT)$ is as desired.
\end{enumerate}
This concludes the argument.
\end{proof}

If $G$ is a $V$-generic filter on $\bP(\hat\bT)$, we define
\begin{equation*}
    \phi_G:=\bigcup G.
\end{equation*}

\begin{theo}\label{sumupgen}
If $G$ is a $V$-generic filter on $\bP(\hat\bT)$, then:
\begin{enumerate}
    \item\label{1.sumupgen} $\phi_G$ is a total function, i.e., it has $\bN$ as its domain.
    \item\label{2.sumupgen} For every $n\in\bN$, $\phi_G(n)\in t_n$. In particular, $\phi_G$ is one-to-one.
    \item\label{3.sumupgen} Every element of $\bC$ is in all but finitely many elements of the set
    \begin{equation*}
        \left\{\bigcup\left\{\phi_G(k):k\in[n]_\zeta\right\}:n\in\bN\right\}.   
    \end{equation*}
\end{enumerate}
\end{theo}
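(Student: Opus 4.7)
The three items are all direct density arguments building on Lemma \ref{densityQ} and the definition of $\bP(\hat\bT)$, so the plan is quite short. The first thing to record is that $\phi_G:=\bigcup G$ is a well-defined partial function taking values in $\bigcup_{n\in\bN}t_n$. Any two $p,q\in G$ have a common extension $r\in G$ in the filter; since the order on $\bP(\hat\bT)$ is reverse inclusion, $p,q\subseteq r$, so $p$ and $q$ agree on $\dom p\cap\dom q$. Hence the union is a function, and since each $p\in G$ is a partial $\hat\bT$-selector, its values lie in the appropriate $t_n$'s.

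For item \eqref{1.sumupgen}, I would simply invoke genericity against the dense open set $D_n$ from Lemma \ref{densityQ}: for every $n\in\bN$ there is $p\in G\cap D_n$, so $n\in\dom p\subseteq\dom\phi_G$. For item \eqref{2.sumupgen}, fix any such $p\in G$ with $n\in\dom p$; because $p$ is a partial $\hat\bT$-selector, $\phi_G(n)=p(n)\in t_n$. Injectivity is then immediate from the construction of $\hat\bT$, which ensured that the finite sets $t_n$ are pairwise disjoint: for $m\neq n$ the values $\phi_G(m)\in t_m$ and $\phi_G(n)\in t_n$ belong to disjoint sets.

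For item \eqref{3.sumupgen}, I would apply genericity against $E_x$ for each $x\in\bC$ in the ground model $V$. By Lemma \ref{densityQ} we obtain $p\in G\cap E_x$, which by definition of $E_x$ supplies an integer $N\in\bN$ such that for every $n\geq N$ there exists $k\in[n]_\zeta\cap\dom p$ with $x\in p(k)$. Since $\phi_G$ extends $p$, this $k$ witnesses $x\in\phi_G(k)\subseteq\bigcup\{\phi_G(j):j\in[n]_\zeta\}$, which is exactly the desired eventual coverage.

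There is no real obstacle: Lemma \ref{densityQ} has already done the combinatorial work, and the theorem is a clean genericity read-off. The one interpretive subtlety worth flagging is that the density proof of $E_x$ naturally delivers the statement for $x\in\bC\cap V$; this suffices for the intended use in Section \ref{BJ}, where item \eqref{3.sumupgen} will be applied to a strong measure zero set $X\subseteq\bC$ that is captured at an intermediate stage of the countable support iteration and therefore lies in the ground model of the subsequent $\bP(\hat\bT)$-extension.
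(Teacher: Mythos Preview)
Your proof is correct and follows essentially the same approach as the paper: invoke genericity against the dense sets $D_n$ and $E_x$ of Lemma~\ref{densityQ} to read off items \eqref{1.sumupgen}--\eqref{3.sumupgen}. You include a few welcome clarifications the paper omits---the well-definedness of $\phi_G$ as a function, the explicit reason for injectivity via the pairwise disjointness of the $t_n$, and the observation that item \eqref{3.sumupgen} is literally established only for $x\in\bC\cap V$, which is exactly what is needed downstream.
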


\begin{proof}
Fix a $V$-generic filter $G$ on $\bP(\hat\bT)$.
\begin{enumerate}
    \item For each $n\in\bN$, let $p\in G\cap D_n$. Then $n\in\dom p\subseteq\dom\phi_G$.
    \item If $n\in\dom p$, since $p$ is a partial $\hat\bT$-selector, $\phi_G(n)=p(n)\in t_n$.
    \item For each $x\in\bC$, choose a condition $p\in G\cap E_x$. Then $x$ is in all but finitely many elements of the set
    \begin{equation*}
        \left\{\bigcup\left\{p(k):k\in[n]_\zeta\cap\dom p\right\}:n\in\bN\right\},
    \end{equation*}
    and therefore (3) follows readily.
\end{enumerate}
\end{proof}

We are now in shape to finish the proof of the main result.

\begin{proof}[Proof of Theorem \ref{main}]
The model in which we are interested is obtained by doing a countable support iteration of $\bP(\hat\bT_\alpha)$, for $\alpha<\omega_2$, over a model of $V=L$, where at each stage in the iteration, $V[G_\alpha]\models``\bT_\alpha\text{ is a sequence of }\omega\text{-covers of }\bC"$, and where we have dovetailed so as to ensure that for any $\bT$ such that
\begin{equation*}
    V[G_{\omega_2}]\models``\bT\text{ is a sequence of }\omega\text{-covers of }\bC",
\end{equation*}
then for cofinally many $\alpha<\omega_2$ we have that $\bT=\bT_\alpha$. This dovetailing can be done since there are only continuum many sequences of $\omega$-covers of $\bC$, and the intermediate models satisfy the continuum hypothesis.

\begin{claim}
$V[G_{\omega_2}]\models``\text{If }X\in[\bC]^{\leq\aleph_1}\text{, then }X\text{ is a }\zeta\text{-supernull subset of }\bC"$.
\end{claim}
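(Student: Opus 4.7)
The plan is, given $X \in [\bC]^{\leq\aleph_1}$ and a sequence $\bT = (A_n : n \in \bN)$ of $\omega$-covers of $\bC$ in $V[G_{\omega_2}]$, to locate a coordinate $\alpha < \omega_2$ of the iteration at which $\bT$ is being dovetailed and $X$ has already appeared, and then to read off the selection $U_n \in A_n$ directly from the generic function $\phi_\alpha$ added at stage $\alpha$ by $\bP(\hat\bT_\alpha)$. By Theorem \ref{sumupgen}, $\phi_\alpha(n)$ lies in $t_n^\alpha \subseteq A_n$, and the $\zeta$-groupings $\{\bigcup\{\phi_\alpha(k):k\in[n]_\zeta\}:n\in\bN\}$ form a pre-$\gamma$-cover of $\bC^{V[G_\alpha]}$, hence of $X$.

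The first step is to locate $X$ in some intermediate model. Since intermediate stages satisfy CH and the iterands have size at most $\aleph_1$ there, $\bP_{\omega_2}$ is proper with the $\aleph_2$-chain condition, and a standard nice-name reflection (enumerate $X = \{x_\xi : \xi < \omega_1\}$ in $V[G_{\omega_2}]$, push each real $x_\xi$ down to $V[G_{\beta_\xi}]$ for some $\beta_\xi<\omega_2$ using $\aleph_2$-cc, and use $\mathrm{cf}(\omega_2)>\aleph_1$ to assemble a single name for $X$) produces $\beta < \omega_2$ with $X \in V[G_\beta]$. The second step invokes the dovetailing hypothesis: the set $\{\alpha < \omega_2 : \bT_\alpha = \bT\}$ is cofinal in $\omega_2$, so I pick $\alpha$ in it with $\alpha \geq \beta$. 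Then both $X$ and $\bT$ lie in $V[G_\alpha]$ and $\hat\bT_\alpha$ is the canonical refinement of $\bT$ constructed inside $V[G_\alpha]$.

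The third step applies Theorem \ref{sumupgen}, in the ground model $V[G_\alpha]$, to $\phi := \phi_\alpha \in V[G_{\alpha+1}]$: $\phi$ is total, $\phi(n)\in t_n^\alpha\subseteq A_n$ for every $n$, and every element of $\bC\cap V[G_\alpha]$ lies in all but finitely many of the sets $\bigcup\{\phi(k):k\in[n]_\zeta\}$. Because $X \subseteq V[G_\beta]\subseteq V[G_\alpha]$, this property holds for every $x \in X$. The witnessing statement is arithmetic in the real parameters $X$ and $\phi$, so it is absolute from $V[G_{\alpha+1}]$ up to $V[G_{\omega_2}]$; thus $U_n := \phi(n)$ witnesses $\zeta$-supernullity of $X$ against $\bT$ in the final extension. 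The main obstacle is the first step, the reflection of $X$ to an intermediate model, which requires combining properness with $\aleph_2$-cc and intermediate CH; everything else is essentially direct bookkeeping using the infrastructure built up in Section 3 and the dovetailing clause set up at the start of the proof of Theorem \ref{main}.
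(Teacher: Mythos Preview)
Your proposal is correct and follows essentially the same route as the paper's proof: reflect $X$ into some intermediate $V[G_\beta]$ via the $\aleph_2$-chain condition, use the dovetailing to find $\alpha\geq\beta$ with $\bT=\bT_\alpha$, and then read off the selection $U_n=\phi_\alpha(n)$ from Theorem~\ref{sumupgen}. You supply a bit more detail than the paper does (the explicit nice-name reflection and the absoluteness remark), but the structure and key ingredients are identical.
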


\begin{proof}[Proof of the Claim]
By \cite[Theorem 16.30]{jechbookset}, the iteration poset $\bP_{\omega_2}$ satisfies the $\aleph_2$-chain condition. This implies that if $X$ is a subset of $\bC$ in $V[G_{\omega_2}]$ with $|X|\leq\aleph_1$, then there exists some $\alpha<\omega_2$ such that $X\in V[G_\alpha]$. Now, if $\bT=(A_n:n\in\bN)$ is such that $V[G_{\omega_2}]\models``\bT\text{ is a sequence of }\omega\text{-covers of }\bC"$, let $\beta\geq\alpha$ be such that $\bT=\bT_\beta$. By Theorem \ref{sumupgen}, there is in $V[G_{\beta+1}]$ an infinite cover $\{U_n:n\in\bN\}$ of $X|\bC$ such that $U_n\in A_n$ for every $n\in\bN$, and such that every element of $X$ is in all but finitely many elements of the set
\begin{equation*}
    \left\{\bigcup\left\{U_k:k\in[n]_\zeta\right\}:n\in\bN\right\}.
\end{equation*}
Therefore, $V[G_{\omega_2}]\models``X\text{ is a }\zeta\text{-supernull subset of }\bC"$.
\end{proof}

We may continue now with the proof of Theorem \ref{main}:
\begin{enumerate}
    \item By Proposition \ref{itershe}, every strong measure zero subset $X$ of $\bC$ in $V[G_{\omega_2}]$ has cardinality at most $\aleph_1$, so it is $\zeta$-supernull.
    \item By Corollary \ref{iteraomegabound}, every function $f\colon\bN\to\bN$ in $V[G_{\omega_2}]$ is dominated by some function $g\colon\bN\to\bN$ in the ground model.
    \item Working in $V[G_{\omega_2}]$: By Proposition \ref{itershe}, $\sN^+(\bC)\subseteq[\bC]^{\leq\aleph_1}$. On the other hand, if $X$ is a subset of $\bC$ with $|X|\leq\aleph_1$, then $X$ is $\zeta$-supernull. By Proposition \ref{bacaneria}, $X$ has sharp measure zero, and by Proposition \ref{sigmaideal}, $X$ has strong measure zero as well.
    \item Follows from the usual argument.
\end{enumerate}
\end{proof}

\section{Concluding remarks}

The following question is due to the anonymous referee:

\begin{quest}\label{questreferee}
Is it relatively consistent with $\zfc$ that $\sN^+(\bR)=\sN^\star(\bR)$, but $\sN^+(\bR)$ is not of the form $[\bR]^{\leq\kappa}$ for any cardinal number $\kappa$?
\end{quest}

We will say that a subset $X$ of the real numbers is a \emph{relative pre-$\gamma$-set} if the selection principle $\sone{\Omega[\bR]}{P\Gamma[X|\bR]}$ holds.

\begin{quest}\label{conj}
Suppose that every strong measure zero set of reals is a relative pre-$\gamma$-set. Does Borel's conjecture follow?
\end{quest}

At first sight, Question \ref{conj} could seem like just a random variation of the Bartoszy\'nski--Judah problem. We will spend the remainder of this section to explain why this is not the case. There exists a forcing notion analogous to $\bP(\hat\bT)$ (see \cite[Theorem 5]{millerrelativegamma}), that adds a pre-$\gamma$-cover by selecting one open set in each coordinate of a given sequence of $\omega$-covers. Fix a sequence $\bT$ of $\omega$-covers of $\bC$. A partial $\bT$-selector $f$ is called \emph{initial} if $\dom f$ is a finite initial segment of $\bN$.

\begin{defin}
The forcing notion $\bS(\bT)$ is the set of pairs $p=(C_p,f_p)$ such that:
\begin{enumerate}
    \item $C_p$ is a finite subset of $\bC$
    \item $f_p$ is an initial $\bT$-selector.
\end{enumerate}
We order $\bS(\bT)$ by $p\leq q$ if:
\begin{enumerate}
    \item $C_p\supseteq C_q$.
    \item $f_p\supseteq f_q$.
    \item $(\forall k\in\dom f_p\setminus\dom f_q)(\forall x\in C_q)(x\in f_p(k))$.
\end{enumerate}
\end{defin}

By \cite[Theorem 6]{millerrelativegamma}, the forcing notion $\bS(\bT)$ has the countable chain condition. Clearly, $\bS(\bT)$ forces that for the sequence of $\omega$-covers $\bT$, we can select one open set in each coordinate of $\bT$ to obtain a pre-$\gamma$-cover of $\bC$. Nevertheless, it is easy to see that $\bS(\bT)$ adds a Cohen real to the universe. This is problematic since if Cohen (in particular unbounded) reals are added to the universe, it is not clear at all how to assure that strong measure zero sets in an extension given by a suitable iteration appeared in some intermediate stage. On the other hand, it is not clear how to force the existence of a pre-$\gamma$-cover by selecting one open set in each coordinate of $\bT$ if countable conditions are used instead of finite ones; once a point avoids infinitely many open sets, it avoids them forever. In particular, one may ask the following:

\begin{quest}
Let $\bT=(A_n:n\in\bN)$ be a sequence of $\omega$-covers of the Cantor space. Is it possible to generically add a cover $\{U_n:n\in\bN\}\in P\Gamma[\bC]$ such that $U_n\in A_n$ for every $n\in\bN$ without adding unbounded (or even Cohen) reals?
\end{quest}

\bibliographystyle{plain}
\bibliography{mybib}
\end{document}